\documentclass{lms}
\usepackage{amssymb}
\usepackage{amsmath}
\usepackage[matrix,arrow,curve,cmtip]{xy}
\usepackage{mathrsfs}
\usepackage{graphicx}

\numberwithin{equation}{section}
\newtheorem{lettertheorem}{Theorem}

\newtheorem{theorem}[equation]{Theorem} 
\newtheorem{lemma}[equation]{Lemma}     
\newtheorem{cor}[equation]{Corollary}
\newtheorem{prop}[equation]{Proposition}
\newtheorem{addendum}[equation]{Addendum}

\newnumbered{definition}[equation]{Definition}
\newnumbered{remark}[equation]{Remark}
\newnumbered{example}[equation]{Example}

\newcommand{\TR}{\operatorname{TR}}
\newcommand{\N}{\mathbb{N}}
\newcommand{\Z}{\mathbb{Z}}
\newcommand{\C}{\mathbb{C}}
\newcommand{\xto}{\xrightarrow}
\newcommand{\length}{\operatorname{length}}

\title[$K$-theory of truncated polynomial algebras]{On the $K$-theory
  of truncated polynomial algebras\\ over the integers}

\author[Angeltveit, Gerhardt, \and Hesselholt]{Vigleik Angeltveit,
  Teena Gerhardt, \and Lars Hesselholt}

\classno{19D55 (primary), 55Q91 (secondary)}

\extraline{The first and third authors were supported in part by the
National Science Foundation}

\begin{document}

\maketitle

\begin{abstract}We show that $K_{2i}(\Z[x]/(x^m),(x))$ is finite of
order $(mi)!(i!)^{m-2}$ and that $K_{2i+1}(\Z[x]/(x^m),(x))$ is free 
abelian of rank $m-1$. This is accomplished by showing that the
equivariant homotopy groups $\TR_{q-\lambda}^n(\Z;p)$ of the
topological Hochschild $\mathbb{T}$-spectrum $T(\Z)$ are free abelian
for $q$ even, and finite for $q$ odd, and by determining their 
ranks and orders, respectively.
\end{abstract}

\section*{Introduction}

It was proved by Soul\'{e}~\cite{soule1} and
Staffeldt~\cite{staffeldt1} that, for every non-negative integer $q$,
the abelian group $K_q(\Z[x]/(x^m),(x))$ is finitely generated and
that its rank is either $0$ or $m-1$ according as $q$ is even or
odd. In this paper, we prove the following more precise result:

\begin{lettertheorem}\label{kthm}Let $m$ be a positive integer and $i$
a non-negative integer. Then:
\begin{enumerate}
\item[(i)] The abelian group $K_{2i+1}(\Z[x]/(x^m),(x))$ is free
of rank $m-1$.
\item[(ii)] The abelian group $K_{2i}(\Z[x]/(x^m),(x))$ is finite of
order $(mi)!(i!)^{m-2}$.
\end{enumerate}
\end{lettertheorem}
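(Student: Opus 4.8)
The plan is to pass from algebraic $K$-theory to equivariant stable homotopy theory by means of the cyclotomic trace. Since $(x)\subset\Z[x]/(x^m)$ is a nilpotent ideal, the theorem of Dundas, Goodwillie and McCarthy identifies the relative $K$-theory spectrum $K(\Z[x]/(x^m),(x))$ with the relative topological cyclic homology spectrum $\TC(\Z[x]/(x^m),(x))$, and by the arithmetic fracture square the homotopy type of the latter is determined by the $p$-complete spectra $\TC(\Z[x]/(x^m),(x);p)$, one for each prime $p$, together with the rational homotopy type. The rational part is, by Goodwillie's theorem, a shift of the relative cyclic homology of $\mathbb{Q}[x]/(x^m)$, which is concentrated in even degrees and of dimension $m-1$ in each such degree; this yields the ranks, recovering the theorem of Soul\'e~\cite{soule1} and Staffeldt~\cite{staffeldt1}, and reduces Theorem~\ref{kthm} to two $p$-primary assertions for each prime $p$: that $K_{2i+1}(\Z[x]/(x^m),(x))$ has no $p$-torsion, and that the $p$-primary part of $K_{2i}(\Z[x]/(x^m),(x))$ has order $p^{v_p((mi)!(i!)^{m-2})}$.

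For these I would use the computation of Hesselholt and Madsen for truncated polynomial algebras, based on the weight decomposition of $T(\Z[x]/(x^m))$ as a cyclotomic $\mathbb{T}$-spectrum and on the combinatorics of cyclic polytopes. It expresses the $p$-completed relative $K$-groups as a finite direct sum (it is finite thanks to connectivity) of the $\mathbb{T}$-equivariant homotopy groups $\TR^n_{q'-\lambda}(\Z;p)$ of $T(\Z)$, for suitable levels $n$, integers $q'$ differing from the $K$-theory degree by one, and finite-dimensional complex $\mathbb{T}$-representations $\lambda$ of the form $\C(1)\oplus\C(2)\oplus\cdots$. Under this decomposition the odd relative $K$-groups are assembled from the summands with $q'$ even and the even ones from the summands with $q'$ odd. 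Granting the main technical theorem of the paper, that $\TR^n_{q'-\lambda}(\Z;p)$ is free abelian for $q'$ even and finite for $q'$ odd with explicitly computed rank and order, torsion-freeness of $K_{2i+1}(\Z[x]/(x^m),(x))$ follows because its $p$-completion is an iterated extension of finitely generated free $\Z_p$-modules, hence free, and the order of $K_{2i}(\Z[x]/(x^m),(x))$ is the product over all primes $p$ of the orders of the finitely many finite groups $\TR^n_{q'-\lambda}(\Z;p)$ that occur.

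The heart of the argument, and the main obstacle, is therefore the computation of the groups $\TR^n_{q-\lambda}(\Z;p)$ themselves, which I would carry out by induction on the level $n$. The input at $n=1$ is B\"okstedt's calculation that $\pi_q T(\Z)$ is $\Z$ for $q=0$, is $\Z/i$ for $q=2i-1$ with $i\ge1$, and vanishes otherwise, refined to the representation-graded homotopy of the underlying $\mathbb{T}$-spectrum by means of the cofiber sequences $S(\lambda)_+\to S^0\to S^\lambda$, which relate $\pi_{q-\lambda}$ to $\pi_q$ through the homotopy of induced free $\mathbb{T}$-cells. The inductive step rests on the fundamental cofibration sequence
\[
T(\Z)_{hC_{p^{n-1}}}\longrightarrow\TR^n(\Z;p)\xrightarrow{\ R\ }\TR^{n-1}(\Z;p),
\]
together with the restriction, Frobenius and Verschiebung operators, which propagate freeness in even degrees and finiteness in odd degrees up the tower while keeping track of the orders. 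The delicate points I expect are to rule out hidden extensions that would blur the even/odd dichotomy, and to verify that the $p$-adic valuations of the orders, accumulated over the tower and then summed over the weights $\lambda$ occurring in the Hesselholt--Madsen decomposition and over all primes, reassemble exactly into the integer $(mi)!(i!)^{m-2}$.
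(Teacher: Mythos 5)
Your high-level plan is the same as the paper's: cyclotomic trace (DGM/McCarthy) to reduce to $\TC$, then the Hesselholt--Madsen formula for truncated polynomials to express the relative $K$-groups in terms of the representation-graded equivariant homotopy groups $\TR^n_{q-\lambda}(\Z;p)$, then computation of those $\TR$-groups by induction on $n$ starting from B\"okstedt's $n=1$ calculation. That matches the paper. However, there is a genuine gap in the step where you pass from the computed $\TR$-groups back to the $K$-groups.

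You assert that the Hesselholt--Madsen formula ``expresses the $p$-completed relative $K$-groups as a finite direct sum of the $\mathbb{T}$-equivariant homotopy groups $\TR^n_{q'-\lambda}(\Z;p)$.'' It does not. What one has is a long exact sequence
\[
\cdots \longrightarrow \lim_R \TR^{r/m}_{q-1-\lambda_d}(\Z) \xrightarrow{\ V_m\ } \lim_R \TR^{r}_{q-1-\lambda_d}(\Z) \longrightarrow K_q(\Z[x]/(x^m),(x)) \longrightarrow \cdots
\]
and, after localizing at $p$ and stabilizing the $R$-limits, the two $\TR$-terms are finite products of $p$-typical groups $\TR^s_{q-1-\lambda_d}(\Z;p)$, connected by a Verschiebung map $V_m$. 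Once one knows $V_m$ is injective, the $K$-group is the \emph{cokernel} of $V_m$, not a direct sum of $\TR$-groups. This matters decisively for Theorem~\ref{kthm}(i): you claim the $p$-completion of $K_{2i+1}$ is ``an iterated extension of finitely generated free $\Z_p$-modules, hence free.'' But a cokernel of an injection between free abelian groups (e.g.\ $\Z\xrightarrow{2}\Z$) need not be torsion-free, so that reasoning does not go through. The missing ingredient is precisely Theorem~\ref{trthm}(iii): that $V\colon \TR^{n-1}_{q-\lambda}(\Z;p)\to \TR^n_{q-\lambda}(\Z;p)$ is injective for all $q$ and has \emph{free abelian cokernel} for $q$ even. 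This is a nontrivial separate statement; the paper proves it for even $q$ by invoking the long exact sequence of~\cite[Proposition~4.2]{h6} involving the representation $\mu=\lambda\oplus\C(p^{n-2})$, which exhibits $V$ (in even degree) as the inclusion of a direct summand. Without this, your argument for part~(i) is incomplete.

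Part~(ii) is less affected: since odd-degree $\TR$-groups are finite and the Verschiebung is injective, the order of the cokernel is the quotient of orders, and your bookkeeping of $p$-adic valuations is the same computation the paper carries out in Propositions~\ref{middleterm} and~\ref{lefthandterm}. One further point worth flagging: the finiteness of the stabilized limits $\lim_R$ is not a consequence of ``connectivity'' alone but of Lemma~\ref{rational} together with the explicit stabilization bound of Addendum~\ref{stabilization}; this is what lets one replace the pro-systems by honest $\TR$-groups at an explicit finite level and is needed to make the product over weights $j$ finite.

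In short: your route is the paper's route, but the crucial structural facts about the Verschiebung (injectivity and, for even degrees, free cokernel) must be supplied, and the ``direct sum'' and ``iterated extension'' framings should be replaced by the long-exact-sequence/cokernel analysis.
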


In particular, the $p$-primary torsion subgroup of
$K_{2i}(\Z[x]/(x^m),(x))$ is zero, for every prime number $p > mi$. At
present, we do not know the group structure of the finite abelian
group in degree $2i$ except for small values of $i$ and $m$. We remark
that the result agrees with the calculation by Geller and
Roberts~\cite{roberts} of the group in degree $2$.

To prove Theorem~\ref{kthm}, we use the cyclotomic trace map of
B\"{o}kstedt-Hsiang-Madsen~\cite{bokstedthsiangmadsen} from
the $K$-groups in the statement to the corresponding topological
cyclic homology groups and a theorem of McCarthy~\cite{mccarthy1}
which shows that this map becomes an isomorphism after pro-finite
completion. The third author and Madsen~\cite[Proposition~4.2.3]{hm1},
in turn, gave a formula for the topological cyclic homology groups in
question in terms of the equivariant homotopy groups 
$$\TR_{q-\lambda}^r(\Z) = 
[ S^q \wedge (\mathbb{T}/C_r)_+, S^{\lambda} \wedge T(\Z) ]_{\mathbb{T}}$$
of the topological Hochschild $\mathbb{T}$-spectrum
$T(\mathbb{Z})$. Here $\mathbb{T}$ is the multiplicative group of
complex numbers of modulus $1$, $C_r \subset \mathbb{T}$ is the
finite subgroup of the indicated order, $\lambda$ is a finite
dimensional complex $\mathbb{T}$-representation, and $S^{\lambda}$ is
the one-point compactification of $\lambda$. Since the groups
$K_q(\Z[x]/(x^m),(x))$ and $\TR_{q-\lambda}^r(\Z)$ are finitely 
generated by~\cite{soule1,staffeldt1} and Lemma~\ref{rational}, respectively, these earlier results amount to a long exact sequence 
$$\xymatrix{
{ \cdots } \ar[r] &
{ \lim_R^{\phantom{R}} \TR_{q-1-\lambda_d}^{r/m}(\Z) } \ar[r]^{V_m} &
{ \lim_R^{\phantom{R}} \TR_{q-1-\lambda_d}^r(\Z) } \ar[r] &
{ K_q(\Z[x]/(x^m),(x)) } \ar[r] &
{ \cdots } \cr
}$$
where $d = d(m,r)$ is the integer part of $(r-1)/m$, and where
$\lambda_d$ is the sum
$$\lambda_d = \C(d) \oplus \C(d-1) \oplus \cdots \oplus \C(1)$$
of the one-dimensional complex $\mathbb{T}$-representations defined by
$\C(i) = \C$ with $\mathbb{T}$ acting from the left by $z \cdot 
w = z^iw$. The two limits range over the positive integers divisible
by $m$ and the positive integers, respectively, ordered under
division. The structure maps $R$ and the map $V_m$ are explained in
Section~\ref{bigtrsection} below. In particular, we show that for every
integer $q$ there exists a positive integer $r = r(m,q)$ divisible
by $m$ such that the canonical projections 
$$\lim_R \TR_{q-\lambda_d}^r(\Z) \to \TR_{q-\lambda_d}^r(\Z), \hskip10mm
\lim_R \TR_{q-\lambda_d}^{r/m}(\Z) \to \TR_{q-\lambda_d}^{r/m}(\Z)$$
are isomorphisms.

After localizing at a prime number $p$, the abelian groups
$\TR_{q-\lambda}^r(\Z)$ decompose as products of the $p$-typical
equivariant homotopy groups
$$\TR_{q-\lambda}^n(\Z;p) = \TR_{q-\lambda}^{p^{n-1}}(\Z) =
[ S^q \wedge (\mathbb{T}/C_{p^{n-1}})_+, S^{\lambda} \wedge
T(\Z)]_{\mathbb{T}}.$$
In addition, the Verschiebung map $V_m$ which appears in the
long exact sequence above may be expressed in terms of the
$p$-typical Verschiebung map $V = V_p$. The corresponding $p$-typical
equivariant homotopy groups with $\Z/p\Z$-coefficients
$$\TR_{q-\lambda}^n(\Z;p,\Z/p\Z) =
[ S^q \wedge (\mathbb{T}/C_{p^{n-1}})_+, M_p \wedge S^{\lambda} \wedge
T(\Z) ]_{\mathbb{T}}$$
were evaluated by the first and second
author~\cite{angeltveitgerhardt} and by Tsalidis~\cite{tsalidis1}.
More generally, the first and second author~\cite{angeltveitgerhardt}
evaluated the $\operatorname{RO}(\mathbb{T})$-graded equivariant
homotopy groups 
$$\TR_{\alpha}^n(\Z;p,\Z/p\Z) =
[ S^{\beta} \wedge (\mathbb{T}/C_{p^{n-1}})_+, M_p \wedge S^{\gamma} \wedge
T(\Z) ]_{\mathbb{T}},$$
where $\alpha \in \operatorname{RO}(\mathbb{T})$ is any virtual finite
dimensional orthogonal $\mathbb{T}$-representation, and where $\beta$
and $\gamma$ are chosen actual representations with $\alpha = [\beta]
- [\gamma]$. Based on these results, we prove: 

\begin{lettertheorem}\label{trthm}Let $p$ be a prime number, let $n$ be a
positive integer, and let $\lambda$ be a finite dimensional complex
$\mathbb{T}$-representation. Then:
\begin{enumerate}
\item[(i)] For $q = 2i$ even, $\TR_{q-\lambda}^n(\Z;p)$ is a free
abelian group whose rank is equal to the number of integers $0
\leqslant s < n$ such that $i = \dim_{\C}(\lambda^{C_{p^s}})$.

\item[(ii)] For $q = 2i - 1$ odd, $\TR_{q-\lambda}^n(\Z;p)$ is a finite
abelian group whose order is determined, recursively, by
$$\lvert \TR_{q-\lambda}^n(\Z;p) \rvert = \begin{cases}
\lvert \TR_{q-\lambda'}^{n-1}(\Z;p) \rvert \cdot
p^{n-1}(i-\dim_{\C}(\lambda)) & 
\text{if\, $i > \dim_{\C}(\lambda)$ } \cr
\lvert \TR_{q-\lambda'}^{n-1}(\Z;p) \rvert & 
\text{if\, $i \leqslant \dim_{\C}(\lambda)$, } \cr 
\end{cases}$$
where
$\lambda' = \rho_p^*\lambda^{C_p}$ is the  
$\mathbb{T}/C_p$-representation $\lambda^{C_p}$ viewed as a
$\mathbb{T}$-representation via the isomorphism $\rho_p \colon
\mathbb{T} \to \mathbb{T}/C_p$ given by the $p$th root.
\item[(iii)] For every integer $q$, the Verschiebung map
$$V \colon \TR_{q-\lambda}^{n-1}(\Z;p) \to
\TR_{q-\lambda}^n(\Z;p)$$
is injective, and for $q$ even the cokernel is a free abelian group.
\end{enumerate}
\end{lettertheorem}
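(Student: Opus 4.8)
The plan is to prove (i), (ii) and (iii) together, reducing first to the $p$-local statement and then inducting on $n$. At any prime $\ell\neq p$ the $\ell$-local genuine $C_{p^{n-1}}$-equivariant stable category splits into pieces indexed by the subgroups $C_{p^{k}}\subseteq C_{p^{n-1}}$ (since $\ell\nmid p^{n-1}$), and the cyclotomic equivalences $\Phi^{C_{p^{k}}}T(\Z)\simeq T(\Z)$ turn this into
$$\TR_{q-\lambda}^{n}(\Z;p)_{(\ell)}\;\cong\;\bigoplus_{k=0}^{n-1}\pi_{q-2\dim_{\C}(\lambda^{C_{p^{k}}})}T(\Z)_{(\ell)};$$
all three statements then follow at $\ell$ from B\"okstedt's calculation $\pi_{*}T(\Z)=\Z$ in degree $0$ and $\Z/j\Z$ in degrees $2j-1$ ($j\geq 1$), using also the relation $FV=p$ in which $p$ is a unit at $\ell$. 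So the problem is entirely $p$-local. The base case $n=1$ of the induction needs no localization: $\TR_{q-\lambda}^{1}(\Z;p)=\pi_{q-2\dim_{\C}\lambda}T(\Z)$ because $\mathbb{T}/C_{p^{0}}=\mathbb{T}$ is a free $\mathbb{T}$-space, and all three assertions are then read off B\"okstedt's groups, with the conventions $\TR^{0}=\ast$ and $\lambda^{C_{p^{0}}}=\lambda$.

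For the inductive step the main tool is the fundamental cofibration sequence of Hesselholt--Madsen, which gives a natural long exact sequence
$$\cdots\to\pi_{q-2d}\bigl(T(\Z)_{hC_{p^{n-1}}}\bigr)\xrightarrow{\ N\ }\TR_{q-\lambda}^{n}(\Z;p)\xrightarrow{\ R\ }\TR_{q-\lambda'}^{n-1}(\Z;p)\xrightarrow{\ \partial\ }\pi_{q-1-2d}\bigl(T(\Z)_{hC_{p^{n-1}}}\bigr)\to\cdots,$$
where $d=\dim_{\C}\lambda$ and $\lambda'=\rho_{p}^{*}\lambda^{C_{p}}$, and where the homotopy orbit spectrum of $S^{\lambda}\wedge T(\Z)$ has been identified with $\Sigma^{2d}T(\Z)_{hC_{p^{n-1}}}$ because $S^{\lambda}$ is equivariantly trivial over $EC_{p^{n-1}}$. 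I would compute the homotopy of $T(\Z)_{hC_{p^{n-1}}}$ from the homotopy orbit spectral sequence $E^{2}_{s,t}=H_{s}(C_{p^{n-1}},\pi_{t}T(\Z))$ — with trivial coefficients, $\mathbb{T}$ being connected — via B\"okstedt's groups and the homology of the cyclic $p$-group; rationally only $E^{2}_{0,0}$ survives, so this spectrum is rationally concentrated in degree $0$ and $\partial\otimes\Q=0$. Combining the resulting rational splitting of the long exact sequence with the finite generation of Lemma~\ref{rational} and the identity $\dim_{\C}\bigl((\lambda')^{C_{p^{s}}}\bigr)=\dim_{\C}\bigl(\lambda^{C_{p^{s+1}}}\bigr)$ reproduces the rank formula of (i) for $n$ from its version for $n-1$, and shows $\TR_{2i-1-\lambda}^{n}(\Z;p)$ is finite.

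The heart of the argument is to promote this rational computation to the integral $p$-local statements (i) and (ii), that is, to control $\partial$ and the extensions in the long exact sequence integrally. Here I would feed in the $\Z/p\Z$-coefficient computation of Angeltveit--Gerhardt~\cite{angeltveitgerhardt} and Tsalidis~\cite{tsalidis1}: smashing the cofibration sequence with the Moore spectrum $M_{p}$ computes $\TR_{q-\lambda}^{n}(\Z;p,\Z/p\Z)$, and feeding this into the $p$-Bockstein spectral sequence — whose $E_{1}$-page is this $\Z/p\Z$-coefficient answer, whose $E_{\infty}$-page is the reduction of the free part already determined by the rational computation, and whose differentials detect the torsion of $\TR_{q-\lambda}^{n}(\Z;p)$ through the universal coefficient sequences $0\to\TR_{q-\lambda}^{n}(\Z;p)/p\to\TR_{q-\lambda}^{n}(\Z;p,\Z/p\Z)\to\TR_{q-1-\lambda}^{n}(\Z;p)[p]\to 0$ — should force every Bockstein differential to originate in an even degree. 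This makes $\TR_{2i-\lambda}^{n}(\Z;p)$ torsion-free at $p$, hence, with the first paragraph, free abelian, which is (i); and tracking the lengths of those differentials, which arise from the summands $\Z/p^{n-1}\Z$ and $\Z/(i-d)\Z$ visible in $\pi_{*}\bigl(T(\Z)_{hC_{p^{n-1}}}\bigr)$, produces the factor $p^{n-1}(i-d)$ and the recursion in (ii). I expect this to be the main obstacle: pinning down $\partial$ integrally — equivalently the higher $p$-Bockstein differentials — is exactly where the precise $\Z/p\Z$-coefficient answer cannot be avoided.

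Finally (iii), carried along in the same induction. The relation $FV=p$ in $\TR$ — restriction after the transfer along the degree $p$ covering $\mathbb{T}/C_{p^{n-2}}\to\mathbb{T}/C_{p^{n-1}}$, the relevant conjugation being trivial since $\mathbb{T}$ is connected — shows $\ker V\subseteq\TR_{q-\lambda}^{n-1}(\Z;p)[p]$. For $q$ even this group vanishes by (i), so $V$ is injective; that its cokernel is free — of rank $1$ if $i=\dim_{\C}(\lambda^{C_{p^{n-1}}})$ and $0$ otherwise, by the rank formula of (i) — holds away from $p$ because $FV=p$ then splits $V$, and at $p$ by reducing $V$ modulo $p$ and invoking the $\Z/p\Z$-coefficient computation. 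For $q$ odd both groups are finite, and injectivity of $V$ reduces, along the $p$-adic filtration, to injectivity of $V$ on mod $p$ homotopy, which is part of the Angeltveit--Gerhardt calculation.
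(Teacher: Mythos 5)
Your overall architecture (reduce to $p$-local, induct on $n$ via the fundamental cofibration sequence, combine rational ranks with mod-$p$ lengths) matches the paper's, and your proof of (i) is essentially correct: the paper uses the same comparison of the length of $\TR_{q-\lambda}^n(\Z;p,\Z/p\Z)$ in adjacent degrees against $\operatorname{rk}_{\Z}\TR_{2i-\lambda}^n(\Z;p)$, plus a Bockstein argument, to conclude torsion-freeness at $p$, and Proposition~\ref{ptypicaldecomposition} for primes $\ell\neq p$. (Your Thom-isomorphism identification $(S^\lambda\wedge T(\Z))_{hC_{p^{n-1}}}\simeq\Sigma^{2d}T(\Z)_{hC_{p^{n-1}}}$ is unnecessary for this; the paper just keeps the twisted Borel homology $\mathbb{H}_*(C_{p^{n-1}},\TR_{\cdot-\lambda}^1(\Z;p))$ and notes that its skeleton spectral sequence has untwisted $E^2$-term, which is all that is used.)

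There are two genuine gaps. First, in (ii) you correctly isolate the crux — computing $\partial$ integrally — but you do not resolve it, and your proposed route does not work as stated. The paper's argument never determines the group extension or the Bockstein differentials individually; it instead reduces, via Lemma~\ref{evensurjective} and the four-term exact sequence, to the ratio of orders
$$\lvert \mathbb{H}_{2i-1}(C_{p^{n-1}},\TR_{\cdot-\lambda}^1)\rvert\ \big/\ \lvert \mathbb{H}_{2i-2}(C_{p^{n-1}},\TR_{\cdot-\lambda}^1)\rvert,$$
and the key input that makes this computable is Proposition~\ref{oddsupport}: \emph{every nonzero differential in the skeleton spectral sequence is supported in odd total degree.} This single structural fact forces the ratio of $E^\infty$-products to equal the ratio of $E^2$-products, which is then $(i-\dim_{\C}\lambda)\cdot p^{n-1}$ by inspection. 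Your Bockstein-tracking proposal would in effect require pinning down the full $p$-group structure of $\TR_{2i-1-\lambda}^n(\Z;p)$, which the paper explicitly remarks is an open problem for $n>1$; nothing in your outline produces the differential-parity statement that circumvents this. The proof of Proposition~\ref{oddsupport}, via surjectivity of the transfer-induced Frobenius on the $E^r$-pages, is the idea you are missing.

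Second, your argument for (iii) in odd degrees is incorrect. You propose to reduce injectivity of $V$ along the $p$-adic filtration to injectivity of $V$ on mod-$p$ homotopy, citing Angeltveit--Gerhardt; but for a map of finite abelian $p$-groups, injectivity is equivalent to injectivity on the $p$-torsion subgroup $A[p]$, not on the quotient $A/pA$, so the reduction is not valid. Worse, the paper states explicitly in the introduction that the value of $V$ on $\TR_{q-\lambda}^n(\Z;p,\Z/p\Z)$ is \emph{not} known for $q$ odd, and that the claim of such a computation in Tsalidis' preprint is incorrect. The paper's actual argument factors $V$ as the edge homomorphism $\epsilon$ into $\mathbb{H}_q(C_p,\TR_{\cdot-\lambda}^{n-1}(\Z;p))$ followed by the norm map $N_{n-1}$; injectivity of $N_{n-1}$ follows from Lemma~\ref{evensurjective} applied in degree $q+1$, and injectivity of $\epsilon$ is established by showing that all differentials $d^r\colon E^r_{r,q-r+1}\to E^r_{0,q}$ vanish, again by the Frobenius-surjectivity device of Proposition~\ref{oddsupport}. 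Your argument for (iii) in even degrees (kernel is contained in the $p$-torsion, which vanishes by (i)) is fine for injectivity, though the paper instead uses the long exact sequence of~\cite[Proposition~4.2]{h6} together with (i), which simultaneously yields that the cokernel is free.
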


We remark that for $\lambda = 0$ the result is that
$$\lvert \TR_{2i-1}^n(\Z;p) \rvert = p^{n(n-1)/2} \, i^n$$
while the even groups all are zero with the exception of
$\TR_0^n(\Z;p)$ which is a free abelian group of rank $n$. In the
case $n = 1$ which was proved by B\"{o}kstedt~\cite{bokstedt1}, the
groups are all cyclic. For $n > 1$, this is not the case. It remains a
very interesting problem to determine the structure of these groups.
We refer to~\cite[Theorem~18]{h7} for some partial results.

It follows from Theorem~\ref{trthm} that with $\Z/p\Z$-coefficients the
Verschiebung map
$$V \colon \TR_{q-\lambda}^{n-1}(\Z;p,\Z/p\Z) \to
\TR_{q-\lambda}^n(\Z;p,\Z/p\Z)$$
is injective for $q$ even. We do not know the value of this map for
$q$ odd. The calculation of this map, and hence, the groups
$K_q(\Z[x]/(x^m),(x);\Z/p\Z)$ claimed
in~\cite[Proposition~7.7]{tsalidis1} is incorrect. Indeed, in
loc.~cit., it is only the map induced by $V$ between the
$E^{\infty}$-terms of two spectral sequences which is evaluated.

The paper is organized as follows. In Section~\ref{bigtrsection}, we
show that the groups $\TR_{q-\lambda}^r(\Z)$ are finitely generated
and determine their ranks. In  Section~\ref{trsection}, we recall the
results of~\cite{angeltveitgerhardt} and~\cite{tsalidis1} and prove
Theorem~\ref{trthm}. In the following Section~\ref{ksection}, we
evaluate the terms in the long exact sequence above and prove
Theorem~\ref{kthm}. Finally, in Section~\ref{dualnumberssection}, we
specialize to the case of the dual numbers and determine the structure
of the finite group $K_{2i}(\Z[x]/(x^2),(x))$ of order $(2i)!$ in low
degrees.

\section{The groups \,$\TR_{q-\lambda}^r(\Z)$}\label{bigtrsection}

In this section, we recall the groups $\TR_{q-\lambda}^r(\Z)$ and the
Frobenius, Verschiebung, and restriction operators that relate
them. We refer to~\cite[Section~1]{hm} and~\cite[Section~2]{h7} for
further details.

Let $A$ be a unital associative ring. The topological
Hochschild $\mathbb{T}$-spectrum $T(A)$ is, in particular, an
orthogonal $\mathbb{T}$-spectrum in the sense
of~\cite[Definition~II.2.6]{mandellmay}. Therefore, for every finite
dimensional orthogonal $\mathbb{T}$-representation $\lambda$ and every
finite subgroup $C_r \subset \mathbb{T}$, we have the equivariant
homotopy group given by the following abelian group of maps in the
homotopy category of orthogonal $\mathbb{T}$-spectra
$$\TR_{q-\lambda}^r(A) = [ S^q \wedge (\mathbb{T}/C_r)_+,
S^{\lambda} \wedge T(A) ]_{\mathbb{T}}.$$
For every divisor $s$ of $r$ with quotient $t = r/s$, there are maps
$$\begin{aligned}
F_s & \colon \TR_{q-\lambda}^r(A) \to \TR_{q-\lambda}^t(A) \hskip7mm
\text{(Frobenius)} \cr
V_s & \colon \TR_{q-\lambda}^t(A) \to \TR_{q-\lambda}^r(A) \hskip7mm
\text{(Verschiebung)} \cr
\end{aligned}$$
induced by maps $f_s \colon (\mathbb{T}/C_t)_+ \to (\mathbb{T}/C_r)_+$
and $v_s \colon (\mathbb{T}/C_r)_+ \to (\mathbb{T}/C_t)_+$ in the
homotopy category of orthogonal $\mathbb{T}$-spectra. The map $f_s$ is
the map of suspension $\mathbb{T}$-spectra induced by the canonical
projection $\operatorname{pr} \colon \mathbb{T}/C_t \to
\mathbb{T}/C_r$ and the map $v_s$ is the corresponding transfer map
defined as follows. Let $\iota \colon \mathbb{T}/C_t \hookrightarrow
\mu$ be an embedding into a finite dimensional orthogonal
$\mathbb{T}$-representation. Then the product embedding
$(\iota,\operatorname{pr}) \colon \mathbb{T}/C_t \to \mu \times
(\mathbb{T}/C_r)$ has trivial normal bundle, and the linear structure
of $\mu$ determines a canonical trivialization. Therefore, the
Pontryagin-Thom construction gives a map of pointed
$\mathbb{T}$-spaces
$$S^{\mu} \wedge (\mathbb{T}/C_r)_+ \to 
S^{\mu} \wedge (\mathbb{T}/C_t)_+.$$
The induced map of suspension $\mathbb{T}$-spectra determines the
homotopy class of maps of orthogonal $\mathbb{T}$-spectra $v_s \colon
(\mathbb{T}/C_r)_+ \to (\mathbb{T}/C_t)_+$ and this homotopy class is
independent of the choice of embedding $\iota$ as well as the choices
made in forming the Pontryagin-Thom construction.

The orthogonal $\mathbb{T}$-spectrum $T(A)$ has the additional
structure of a cyclotomic spectrum in the sense
of~\cite[Definition~2.2.]{hm}. This implies that, in the situation
above, there is a map
$$R_s \colon \TR_{q-\lambda}^r(A) \to
\TR_{q-\lambda'}^t(A) \hskip 7mm \text{(restriction)},$$
where $\lambda' = \rho_s^*(\lambda^{C_s})$ is the
$\mathbb{T}/C_s$-representation $\lambda^{C_s}$ considered as a 
$\mathbb{T}$-representation via the isomorphism $\rho_s \colon
\mathbb{T} \to \mathbb{T}/C_s$ defined by $\rho_s(z) = z^{1/s}C_s$. 
Moreover, the map $R_s$ admits a canonical factorization which we now
explain. In general, let $G$ be a compact Lie group and $\mathscr{F}$ a
family of closed subgroups of $G$ stable under conjugation and passage to
subgroups. We recall that a universal $\mathscr{F}$-space is a
$G$-CW-complex $E\mathscr{F}$ with the property that, for every closed
subgroup $H \subset G$, the fixed point set $(E\mathscr{F})^H$ is
contractible if $H \in \mathscr{F}$ and empty if
$H \notin \mathscr{F}$. It was proved by
tom~Dieck~\cite[Satz~1]{tomdieck} that a universal $\mathscr{F}$-space
$E\mathscr{F}$ exists and that, if both $E\mathscr{F}$ and
$E'\!\mathscr{F}$ are universal $\mathscr{F}$-spaces, then there
exists a unique $G$-homotopy class of $G$-homotopy equivalences $f \colon
E\mathscr{F} \to E'\!\mathscr{F}$. Given a universal
$\mathscr{F}$-space $E\mathscr{F}$, the pointed $G$-space
$\tilde{E}\mathscr{F}$ is defined to be the mapping cone of the map 
$\pi \colon  E\mathscr{F}_+ \to S^0$ that collapses $E\mathscr{F}$ onto
the non-base point such that we have a cofibration sequence of pointed
$G$-spaces
$$E\mathscr{F}_+ \xto{\pi}
S^0 \xto{\iota}
\tilde{E}\mathscr{F} \xto{\delta}
\Sigma E\mathscr{F}_+.$$
If $N \subset G$ is a closed normal subgroup, we denote by
$\mathscr{F}[N]$ the family of closed subgroups $H \subset G$ that do
not contain $N$ as a subgroup. Now, the map $R_s$ admits a factorization
as the composition of the map
$$\TR_{q-\lambda}^r(A) =
[S^q \wedge (\mathbb{T}/C_r)_+, S^{\lambda} \wedge
T(A) ]_{\mathbb{T}} \to
[S^q \wedge (\mathbb{T}/C_r)_+, S^{\lambda} \wedge
\tilde{E}\mathscr{F}[C_s] \wedge T(A) ]_{\mathbb{T}}$$
induced by the map $\iota \colon S^0 \to \tilde{E}\mathscr{F}[C_s]$
and a canonical isomorphism
$$[S^q \wedge (\mathbb{T}/C_r)_+, S^{\lambda} \wedge
\tilde{E}\mathscr{F}[C_s] \wedge T(A) ]_{\mathbb{T}} \xto{\sim}
[S^q \wedge (\mathbb{T}/C_t)_+, S^{\lambda'} \wedge T(A)
]_{\mathbb{T}} = \TR_{q-\lambda'}^t(A)$$
induced from the cyclotomic structure of $T(A)$
and~\cite[Proposition~V.4.17]{mandellmay}.

The group isomorphism $\rho_s \colon \mathbb{T} \to \mathbb{T}/C_s$
gives rise to an equivalence of categories $\rho_s^*$ from the
category of orthogonal $\mathbb{T}/C_s$-spectra to the category of
orthogonal $\mathbb{T}$-spectra defined by
$$(\rho_s^*T)({\lambda}) = \rho_s^*(T({(\rho_s^{-1})^*\lambda})).$$
The following result is a generalization of~\cite[Theorem~2.2]{hm}. 

\begin{prop}\label{fundamentallongexactsequence}Let $A$ be a unital
associative ring, $r$ a positive integer, and $\lambda$ a finite
dimensional orthogonal $\mathbb{T}$-representation. Let $p$ be a prime
number that divides $r$, and $u$ and $v$ positive integers with $u+v =
v_p(r) + 1$. Then there is a natural long exact sequence
$$\cdots \to
\mathbb{H}_q(C_{p^u},\TR_{\,\boldsymbol{\cdot}-\lambda}^{r/p^u}(A)) \xto{N_{p^u}}
\TR_{q-\lambda}^r(A) \xto{R_{p^v}}
\TR_{q-\lambda'}^{r/p^v}(A) \xto{\partial}
\mathbb{H}_{q-1}(C_{p^u},\TR_{\,\boldsymbol{\cdot}-\lambda}^{r/p^u}(A)) \to
\cdots$$
where the left-hand term is the $q$th Borel homology group of the
group $C_{p^u}$ with coefficients in the orthogonal
$\mathbb{T}$-spectrum defined by
$$\TR_{\,\boldsymbol{\cdot}-\lambda}^{r/p^u}(A) =
\rho_{r/p^u}^*(S^{\lambda} \wedge T(A))^{C_{r/p^u}}.$$
\end{prop}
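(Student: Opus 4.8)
The plan is to obtain the asserted sequence as the long exact sequence of equivariant homotopy groups associated to an isotropy separation cofibration sequence, generalizing the proof of~\cite[Theorem~2.2]{hm}. Write $e = v_p(r)$, so that the hypotheses read $1 \leqslant u,v \leqslant e$ and $u + v = e + 1$. I would work with the family $\mathscr{F}[C_{p^v}]$ of closed subgroups of $\mathbb{T}$ not containing $C_{p^v}$ and smash the cofibration sequence of pointed $\mathbb{T}$-spaces
$$E\mathscr{F}[C_{p^v}]_+ \xto{\pi} S^0 \xto{\iota} \tilde{E}\mathscr{F}[C_{p^v}] \xto{\delta} \Sigma E\mathscr{F}[C_{p^v}]_+$$
with the orthogonal $\mathbb{T}$-spectrum $S^{\lambda} \wedge T(A)$. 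Applying the functor $[S^q \wedge (\mathbb{T}/C_r)_+, -]_{\mathbb{T}}$ produces a long exact sequence, natural in $A$, whose middle term is $\TR_{q-\lambda}^r(A)$. Since $[S^q \wedge (\mathbb{T}/C_r)_+, Y]_{\mathbb{T}} \cong \pi_q^{C_r}(Y)$ by the induction--restriction adjunction, only the $C_r$-equivariant homotopy type of the spectra occurring matters, and it remains to identify the genuine $C_r$-fixed point spectra of the two outer smash products together with the three maps.

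The term built from $\tilde{E}\mathscr{F}[C_{p^v}]$ is handled by the factorization of the restriction map recalled above: the cyclotomic structure of $T(A)$ and~\cite[Proposition~V.4.17]{mandellmay} identify it with $\TR_{q-\lambda'}^{r/p^v}(A)$ and the map induced by $\iota$ with $R_{p^v}$. For the term built from $E\mathscr{F}[C_{p^v}]_+$, I would first record the combinatorial observation that a closed subgroup $C_k \subseteq C_r$ fails to contain $C_{p^v}$ exactly when $v_p(k) \leqslant v - 1$; since $v_p(r/p^u) = e - u = v - 1$, and the prime-to-$p$ part of $k$ automatically divides that of $r$, this holds exactly when $C_k \subseteq C_{r/p^u}$. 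This is the point at which the relation $u + v = e + 1$ is used. Consequently the restriction to $C_r$ of $E\mathscr{F}[C_{p^v}]$ is a universal space for the family of all subgroups of $C_{r/p^u}$, hence is $C_r$-homotopy equivalent to the inflation, along the quotient map $C_r \to C_r/C_{r/p^u}$, of a free contractible CW-complex for the cyclic group $C_r/C_{r/p^u}$ of order $p^u$.

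Granting this, I would identify the genuine $C_r$-fixed point spectrum of $E\mathscr{F}[C_{p^v}]_+ \wedge S^{\lambda} \wedge T(A)$ by using that the $C_r$-cells of this inflated free complex are induced up from $C_{r/p^u}$: applying the exact functor of genuine $C_r$-fixed points to the cellular filtration and using the change-of-groups isomorphism $((C_r/C_{r/p^u})_+ \wedge Y)^{C_r} \simeq Y^{C_{r/p^u}}$, one exhibits the answer as the homotopy orbit spectrum of the residual $C_r/C_{r/p^u}$-action on the genuine fixed point spectrum $(S^{\lambda} \wedge T(A))^{C_{r/p^u}}$. Transporting along $\rho_{r/p^u}$ and passing to homotopy groups then identifies this outer term with $\mathbb{H}_q(C_{p^u}, \TR_{\,\boldsymbol{\cdot}-\lambda}^{r/p^u}(A))$, the map induced by $\pi$ with $N_{p^u}$, and the map induced by $\delta$ with $\partial$. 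The hard part will be exactly this last identification: one must keep careful track of the interplay between genuine fixed points, the inflated free $\mathbb{T}$-space, and the change-of-group equivalence along $\rho_{r/p^u}$ so as to land on the Borel homology group with precisely the asserted indexing, and then check that the three maps so obtained agree with the maps in the long exact sequence of the cofibration.
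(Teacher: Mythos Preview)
Your proposal is correct and follows essentially the same route as the paper: both arguments smash the isotropy separation sequence for $\mathscr{F}[C_{p^v}]$ with $S^{\lambda}\wedge T(A)$, identify the $\tilde{E}$-term via the cyclotomic factorization of $R_{p^v}$, and identify the $E\mathscr{F}$-term as Borel homology by observing that, restricted to $C_r$, the family $\mathscr{F}[C_{p^v}]$ coincides with the family of subgroups of $C_{r/p^u}$ (precisely your use of $u+v=v_p(r)+1$), so that $E\mathscr{F}[C_{p^v}]$ may be taken to be a $C_{r/p^u}$-fixed free $C_r/C_{r/p^u}$-complex. The paper carries this out by first invoking the change-of-groups isomorphism from $\mathbb{T}$ to $C_r$ and then pulling $E\mathscr{F}[C_{p^v}]_+$ past the $C_{r/p^u}$-fixed points, but this is exactly your cellular argument phrased slightly differently.
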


\begin{proof}The cofibration sequence of pointed $\mathbb{T}$-spaces
$$E\mathscr{F}[C_{p^v}]_+ \xto{\pi}
S^0 \xto{\iota}
\tilde{E}\mathscr{F}[C_{p^v}] \xto{\delta}
\Sigma E\mathscr{F}[C_{p^v}]_+$$
gives rise to a cofibration sequence of orthogonal
$\mathbb{T}$-spectra
$$E\mathscr{F}[C_{p^v}]_+ \wedge T(A) \xto{\pi}
T(A) \xto{\iota}
\tilde{E}\mathscr{F}[C_{p^v}] \wedge T(A) \xto{\delta}
\Sigma E\mathscr{F}[C_{p^v}]_+ \wedge T(A).$$
This, in turn, gives rise to a long exact sequence of equivariant
homotopy groups which we now identify with the long exact sequence of
the statement. By definition, we have
$$\TR_{q-\lambda}^r(A) = 
[ S^q \wedge (\mathbb{T}/C_r)_+, 
S^{\lambda} \wedge T(A) ]_{\mathbb{T}},$$
and, as recalled above, the restriction map $R_{p^v}$ factors through
the canonical isomorphism
$$[ S^q \wedge (\mathbb{T}/C_r)_+, S^{\lambda} \wedge 
\tilde{E}\mathscr{F}[C_{p^v}] \wedge T(A) ]_{\mathbb{T}}
\xto{\sim} \TR_{q-\lambda'}^{r/p^v}(A).$$
This identifies the middle and right-hand term of the long exact
sequence. To identify the left-hand term, we recall
from~\cite[Proposition~V.2.3]{mandellmay} the change-of-groups
isomorphism
$$[ S^q \wedge (C_r/C_r)_+, S^{\lambda} \wedge E\mathscr{F}[C_{p^v}]_+
\wedge T(A) ]_{C_r}
\xto{\sim}
[ S^q \wedge (\mathbb{T}/C_r)_+, S^{\lambda} \wedge E\mathscr{F}[C_{p^v}]_+
\wedge T(A) ]_{\mathbb{T}}.$$
On the left-hand side, the family $\mathscr{F}[C_{p^v}]$ is equal to
the family of subgroups $C_s \subset C_r$ for which $v_p(s) < v$.
Therefore, we may choose the universal space $E\mathscr{F}[C_{p^v}]$
to be a $C_r$-CW-complex that is non-equivariantly contractible and
that only has cells of orbit-type $C_r/C_{r/p^u}$. Indeed, in this
case we have
$$(E\mathscr{F}[C_{p^v}])^{C_s} = \begin{cases}
E\mathscr{F}[C_{p^v}] & \text{if $v_p(s) < v$} \cr
\varnothing & \text{if $v_p(s) \geqslant v$} \cr
\end{cases}$$
as required. We then have canonical isomorphisms
$$\begin{aligned}
{ [ S^q, S^{\lambda} \wedge E\mathscr{F}[C_{p^v}]_+ \wedge T(A) ]_{C_r} }
{} & \xleftarrow{\sim} [S^q, (S^{\lambda} \wedge E\mathscr{F}[C_{p^v}]_+ 
\wedge T(A))^{C_{r/p^u}} ]_{C_r/C_{r/p^u}} \cr
{} & \xleftarrow{\sim} [S^q, E\mathscr{F}[C_{p^v}]_+ 
\wedge (S^{\lambda} \wedge T(A))^{C_{r/p^u}} ]_{C_r/C_{r/p^u}} \cr
\end{aligned}$$
where for the second isomorphism we use that $E\mathscr{F}[C_{p^v}]$
is chosen to be $C_{r/p^u}$-fixed. The group isomorphism $\rho_{r/p^u}
\colon C_{p^u} \to C_r/C_{r/p^u}$ induces an isomorphism of categories
$\rho_{r/p^u}^*$ from the category of orthogonal $C_{p^u}$-spectra to
the category of orthogonal $C_r/C_{r/p^u}$-spectra. In particular,
this gives an isomorphism of the lower group above to the group
$$[S^q, \rho_{r/p^u}^*E\mathscr{F}[C_{p^v}]_+ \wedge
\rho_{r/p^u}^*(S^{\lambda} \wedge T(A))^{C_{r/p^u}} ]_{C_{p^u}} =
\mathbb{H}_q(C_{p^u},\TR_{\,\boldsymbol{\cdot}-\lambda}^{r/p^u}(A)).$$
This is indeed the desired Borel homology group, since
$\rho_{r/p^u}^*E\mathscr{F}[C_{p^v}]$ is a free $C_{p^u}$-CW-complex
which is non-equivariantly contractible.
\end{proof}

We recall that the Borel homology groups that appear in the statement
of Proposition~\ref{fundamentallongexactsequence} are the abutment of
the first quadrant skeleton spectral sequence
\begin{equation}\label{skeletonspectralsequence}
E_{s,t}^2 = H_s(C_{p^u},\TR_{t-\lambda}^{r/p^u}(A)) \Rightarrow
\mathbb{H}_{s+t}(C_{p^u},\TR_{\,\boldsymbol{\cdot}-\lambda}^{r/p^u}(A))
\end{equation}
from the group homology of $C_{p^u}$ with coefficients in the trivial
$C_{p^u}$-module\, $\TR_{t-\lambda}^{r/p^u}(A)$; see for
instance~\cite[Section~4]{h7}. 

We now specialize to the case $A = \Z$ and recall from
B\"{o}kstedt~\cite{bokstedt1} that $\TR_q^1(\Z)$ is zero if either
$q$ is negative or $q$ is positive and even, a free abelian group of
rank one if $q = 0$, and a finite cyclic group of order $i$ if $q =
2i-1$ is positive and odd; see  also~\cite{lindenstraussmadsen}.

\begin{lemma}\label{rational}Let $r$ be a positive integer, let $q$
be an integer, and let $\lambda$ be a finite dimensional complex
$\mathbb{T}$-representation. Then $\TR_{q-\lambda}^r(\Z)$ is a 
finitely generated abelian group whose rank is equal to the number
of positive divisors $e$ of $r$ for which
$q = 2\dim_{\C}(\lambda^{C_e})$. The group is zero for
$q < 2\dim_{\C}(\lambda^{C_r})$.
\end{lemma}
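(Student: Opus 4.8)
The plan is to prove Lemma~\ref{rational} by induction on $v_p(r)$ for each prime $p$, using the long exact sequence of Proposition~\ref{fundamentallongexactsequence} together with the skeleton spectral sequence~\eqref{skeletonspectralsequence}, starting from B\"okstedt's computation of $\TR_q^1(\Z)$. The base case $r = 1$ is immediate from that computation: the rank of $\TR_{q-\lambda}^1(\Z)$ is $1$ if $q = 2\dim_{\C}(\lambda^{C_1}) = 2\dim_{\C}(\lambda)$ and $0$ otherwise, and the group vanishes for $q < 2\dim_{\C}(\lambda)$ since $\TR_{q-\lambda}^1(\Z) = \pi_{q-\lambda}(T(\Z)) \cong \pi_{q-\dim_{\C}(\lambda)\cdot 2}(T(\Z))$ by the Thom isomorphism for the (non-equivariant) complex representation $\lambda$, and $\TR_j^1(\Z) = 0$ for $j < 0$.

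For the inductive step, write $r = p^v r'$ with $p \nmid r'$ and assume the statement known for $r/p$. First I would reduce to the $p$-typical situation: the restriction map $R_{p^{v-1}}$ together with Proposition~\ref{fundamentallongexactsequence} applied with $u = 1$, $v = v_p(r)$ (so $r/p^u = r/p$) gives a long exact sequence
$$\cdots \to \mathbb{H}_q(C_p, \TR_{\,\boldsymbol{\cdot}-\lambda}^{r/p}(\Z)) \xto{N_p} \TR_{q-\lambda}^r(\Z) \xto{R_{p^{v_p(r)}}} \TR_{q-\lambda''}^{r'}(\Z) \to \cdots,$$
where $\lambda'' = \rho_{p^{v}}^*(\lambda^{C_{p^v}})$. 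Since $r' < r$ and $v_p(r') = 0 < v_p(r)$, the outer term on the right is controlled by the inductive hypothesis (its rank counts divisors $e$ of $r'$ with $q = 2\dim_\C(\lambda^{C_{p^v e}})$, because $(\lambda'')^{C_e} = \lambda^{C_{p^v e}}$ under $\rho_{p^v}$). For the left-hand Borel homology term, I would analyze the spectral sequence~\eqref{skeletonspectralsequence}: its $E^2_{s,t}$ is the group homology $H_s(C_p, \TR_{t-\lambda}^{r/p}(\Z))$, which by the inductive hypothesis has $\TR_{t-\lambda}^{r/p}(\Z)$ finitely generated with known rank. The key point is that $H_s(C_p, M)$ for a finitely generated $M$ contributes to the rational homology only in degree $s = 0$ (where it is $M \otimes \Q$), since $C_p$ is finite; so $\mathbb{H}_q(C_p, \TR_{\,\boldsymbol{\cdot}-\lambda}^{r/p}(\Z))$ has rank equal to the rank of $\TR_{q-\lambda}^{r/p}(\Z)$, namely the number of divisors $e$ of $r/p$ with $q = 2\dim_\C(\lambda^{C_e})$. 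Finite generation of $\TR_{q-\lambda}^r(\Z)$ follows since it is squeezed between two finitely generated groups, noting that each skeleton spectral sequence abutment is finitely generated because $C_p$ has a finite CW model and the coefficient groups are finitely generated in each degree.

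The rank count then requires showing the long exact sequence splits rationally, i.e. that $N_p \otimes \Q$ is injective, equivalently $R_{p^{v_p(r)}} \otimes \Q$ is surjective, so that $\operatorname{rank}\TR_{q-\lambda}^r(\Z)$ is the sum of the two outer ranks; summing over divisors $e \mid r/p$ with $q = 2\dim_\C(\lambda^{C_e})$ and divisors $e \mid r'$ with $q = 2\dim_\C(\lambda^{C_{p^v e}})$ gives exactly the divisors $e \mid r$ with $q = 2\dim_\C(\lambda^{C_e})$ (partitioned by $v_p(e) < v$ versus $v_p(e) = v$). The rational splitting I expect to be the main obstacle; the cleanest route is probably to invoke the Frobenius map $F_{p^{v_p(r)}}$ as a rational section — composing the inclusion of geometric fixed points into homotopy fixed points — or alternatively to observe that after rationalization the cyclotomic trace and the known structure of $\TR^r(\Z)\otimes\Q$ (concentrated in even degrees, by the rank statement being proved and a separate even-concentration argument) force all differentials and connecting maps to vanish. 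Finally, the vanishing statement $\TR_{q-\lambda}^r(\Z) = 0$ for $q < 2\dim_\C(\lambda^{C_r})$ follows by downward induction on $v_p(r)$ in the same long exact sequence: for such $q$ one has $q < 2\dim_\C(\lambda^{C_{p^v}}) = 2\dim_\C((\lambda'')^{C_1}) \le 2\dim_\C((\lambda'')^{C_{r'}})$, killing the right-hand term by the inductive hypothesis, while $q < 2\dim_\C(\lambda^{C_{r/p}})$ kills all $E^2_{0,t}$ of~\eqref{skeletonspectralsequence} with $t \le q$ and hence the Borel homology term in degrees $\le q$, so $\TR_{q-\lambda}^r(\Z)$ vanishes.
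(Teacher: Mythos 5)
Your overall strategy matches the paper's: induct by peeling off one prime at a time using Proposition~\ref{fundamentallongexactsequence}, analyze the Borel homology term via the skeleton spectral sequence~(\ref{skeletonspectralsequence}), and feed in B\"okstedt's computation at $r=1$. The paper takes $u = v_p(r)$, $v=1$ (so the Borel term has coefficients in $\TR^{r'}$ and the restriction lands in $\TR^{r/p}$) and inducts on the number of prime divisors of $r$ with a nested induction on $v_p(r)$; you take $u=1$, $v=v_p(r)$ and induct on $v_p(r)$. Both decompose the divisors of $r$ into two pieces and both close, so this is a cosmetic difference.

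Where you go wrong is in declaring the ``rational splitting'' to be the main obstacle and gesturing at Frobenius sections or a rationalization trick to resolve it. Nothing of the sort is needed: the number $\ell(r,q,\lambda)$ of divisors $e\mid r$ with $q = 2\dim_{\C}(\lambda^{C_e})$ is zero whenever $q$ is odd, so by the inductive hypothesis both outer terms of the long exact sequence have rank zero in odd total degree. For $q$ even, the boundary maps $\TR_{q-\lambda''}^{r'}(\Z)\to\mathbb{H}_{q-1}$ and $\mathbb{H}_q\leftarrow\TR_{q+1-\lambda''}^{r'}(\Z)$ therefore rationally vanish, and one gets a short exact sequence after tensoring with $\Q$ for free; for $q$ odd, $\TR_{q-\lambda}^r(\Z)$ is caught between two finite groups and is itself finite. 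That is exactly what the paper's formula $\ell(q,r',\lambda)+\ell(q,r/p,\lambda')=\ell(q,r,\lambda)$ is invoking, without any appeal to $N_p$ being rationally split mono or $R$ being rationally split epi. One small slip in your vanishing argument: the chain $2\dim_{\C}(\lambda^{C_{p^v}}) = 2\dim_{\C}((\lambda'')^{C_1}) \leqslant 2\dim_{\C}((\lambda'')^{C_{r'}})$ has the last inequality reversed; you should instead use $q < 2\dim_{\C}(\lambda^{C_r}) = 2\dim_{\C}((\lambda'')^{C_{r'}})$ directly, which gives the vanishing of the right-hand term immediately.
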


\begin{proof}Let $\ell(r,q,\lambda)$ denote the number of positive
divisors $e$ of $r$ with $q = 2\dim_{\C}(\lambda^{C_e})$ and note
that $\ell(r,q,\lambda)$ is zero, for $q$ odd. We prove by induction
on the number $k$ of prime divisors in $r$ that
$\TR_{q-\lambda}^r(\Z)$ is a finitely generated abelian group of rank 
$\ell(r,q,\lambda)$. If $k = 0$, or equivalently, if $r = 1$ the
statement follows from the result of B\"{o}kstedt which we recalled
above. Indeed, it follows from~\cite[Proposition~V.2.3]{mandellmay}
that, up to isomorphism,
$$\TR_{q-\lambda}^1(\Z) = \TR_{q - 2\dim_{\C}(\lambda)}^1(\Z).$$
So we let $k \geqslant 1$ and assume that the lemma has been proved,
for all $q$ and $\lambda$ as in the statement if $r$ has $k-1$ prime
divisors. Let $p$ be a prime divisor of $r$ and write $r = p^nr'$
with $r'$ not divisible by $p$. We consider the long exact sequence of
Proposition~\ref{fundamentallongexactsequence} with $u = n$ and $v =
1$,
$${ \cdots } \to
{ \mathbb{H}_q(C_{p^n},\TR_{\,\boldsymbol{\cdot}-\lambda}^{r'}(\Z)) } \xto{N_{p^n}}
{ \TR_{q-\lambda}^r(\Z) } \xto{R_p}
{ \TR_{q-\lambda'}^{r/p}(\Z) } \xto{\partial}
{ \mathbb{H}_{q-1}(C_{p^n},\TR_{\,\boldsymbol{\cdot}-\lambda}^{r'}(\Z)) } \to
{ \cdots }$$
Since $r'$ has only $k-1$ prime divisors, the inductive hypothesis
implies that in the skeleton spectral
sequence~(\ref{skeletonspectralsequence}), $E_{0,q}^2$ is a finitely
generated abelian group of rank $\ell(q,r',\lambda)$ and that the
groups $E_{s,t}^2$ with $s > 0$ are finite. Hence, the left-hand group
in the long exact sequence is finitely generated of rank
$\ell(q,r',\lambda)$. By further induction on $n \geqslant 0$, we may
assume that the group $\TR_{q-\lambda'}^{r/p}(\Z)$ is finitely
generated of rank $\ell(q,r/p,\lambda')$. The first part of the lemma
now follows from the formula
$$\ell(q,r',\lambda) + \ell(q,r/p,\lambda') = \ell(q,r,\lambda)$$
which holds since the two summands on the left-hand side count the
number of positive divisors $e$ of $r$ with $q = 2\dim_{\C}(\lambda^{C_e})$
for which $e$ is, respectively, prime to $p$ and divisible by $p$. The
second part of the lemma is proved in a similar manner.
\end{proof}

\begin{addendum}\label{stabilization} {\rm (i)} Let $m, r \geqslant 1$, $0
\leqslant \epsilon \leqslant 1$, and $i$ be integers, and let $d =
d(m,r)$ be the integer part of $(r-1)/m$. Then the canonical
projection induces an isomorphism
$$\lim_R \TR_{2i+\epsilon-\lambda_d}^r(\Z) \xto{\sim}
\TR_{2i+\epsilon-\lambda_d}^r(\Z),$$
provided that $m(i+1) < p^{v_p(r)+1}$ for every prime number $p$.

{\rm (ii)} Let $m \geqslant 1$, $0 \leqslant \epsilon \leqslant 1$,
and $i$ be integers, let $r \geqslant 1$ be an integer divisible by
$m$, and let $d = d(m,r)$. Then the canonical projection induces an
isomorphism
$$\lim_R \TR_{2i+\epsilon-\lambda_d}^{r/m}(\Z) \xto{\sim}
\TR_{2i+\epsilon-\lambda_d}^{r/m}(\Z)$$
provided that $i+1 < p^{v_p(r/m)+1}$ for every prime number $p$.
\end{addendum}

\begin{proof}We prove statement~(i); the proof of statement~(ii) is
similar. It suffices to show that for every prime number $p$ the 
restriction map
$$R_p \colon \TR_{q - \lambda_{d(m,pr)}}^{pr}(\Z) \to
\TR_{q-\lambda_{d(m,r)}}^r(\Z)$$
is an isomorphism if $q = 2i+\epsilon$ with $m(i+1) < p^{v_p(r)+1}$.
We write $r = p^{n-1}r'$ with $r'$ not divisible by $p$ and consider
the long exact sequence of
Proposition~\ref{fundamentallongexactsequence} with $u = n$ and $v =
1$,
$$\cdots \to
\mathbb{H}_q(C_{p^n},\TR_{\,\boldsymbol{\cdot}-\lambda_{d(m,pr)}}^{r'}(\Z)) \xto{N_{p^n}}
\TR_{q-\lambda_{d(m,pr)}}^{pr}(\Z) \xto{R_p}
\TR_{q-\lambda_{d(m,r)}}^r(\Z) \to
\cdots$$
The skeleton spectral sequence~(\ref{skeletonspectralsequence}) and
Lemma~\ref{rational} show that the left-hand group vanishes, provided
that $q < 2\dim_{\C}(\lambda_{d(m,pr)}^{C_{r'}})$. Therefore, the map
$R_p$ is an isomorphism for
$$i < \dim_{\C}(\lambda_{d(m,pr)}^{C_{r'}}) = \lfloor d(m,pr)/r'\rfloor.$$
We claim that $d(m,p^n) \leqslant \lfloor d(m,pr)/r'\rfloor$. Indeed,
this inequality is equivalent to the inequality $d(m,p^n) \leqslant
d(m,pr)/r'$ which is equivalent to the inequality $r'd(m,p^n)
\leqslant d(m,pr)$ which, in turn, is equivalent to the inequality
$r'd(m,p^n) \leqslant (p^nr'-1)/m$. We may rewrite this inequality as
$mr'd(m,p^n) \leqslant p^nr'-1$ or $mr'd(m,p^n) < p^nr'$ or $md(m,p^n)
< p^n$. But this inequality is equivalent to the inequality $md(m,p^n)
\leqslant p^n-1$ which, in turn, is equivalent to the inequality
$d(m,p^n) \leqslant (p^n-1)/m$ which holds. The claim
follows. Finally, a similar argument shows that the inequalities
$i < d(m,p^n)$ and $m(i+1) < p^n$ are equivalent.
\end{proof}

\section{The $p$-typical groups $\TR_{q-\lambda}^n(\Z_;p)$}\label{trsection} 

In this section, we prove Theorem~\ref{trthm} of the introduction. We
first show that after localization at the prime number $p$, the
groups $\TR_{q-\lambda}^r(A)$ decompose as products of the $p$-typical
groups 
$$\operatorname{TR}_{q-\lambda}^n(A;p) =
\operatorname{TR}_{q-\lambda}^{p^{n-1}}(A) = 
[ S^q \wedge (\mathbb{T}/C_{p^{n-1}})_+, S^{\lambda} \wedge T(A)
]_{\mathbb{T}}.$$

\begin{prop}\label{ptypicaldecomposition}Let $A$ be a unital
associative ring, $r \geqslant 1$ and $q$ integers, and $\lambda$ a
finite dimensional orthogonal $\mathbb{T}$-representation. Let $p$ be
a prime number and write $r = p^{n-1}r'$ with $r'$ not divisible by
$p$. Then the map
$$\gamma \colon \TR_{q-\lambda}^r(A) \to \textstyle{\prod_{j \mid r'}} \TR_{q-\lambda'}^n(A;p)$$
whose $j$th component is the composite map
$$\xymatrix{
{ \operatorname{TR}_{q-\lambda}^r(A) } \ar[r]^{F_j} &
{ \operatorname{TR}_{q-\lambda}^{r/j}(A) } \ar[r]^(.48){R_{r'/j}} &
{ \operatorname{TR}_{q-\lambda'}^{p^{n-1}}(A) } \ar@{=}[r]<-.2ex> &
{ \operatorname{TR}_{q-\lambda'}^{n^{\phantom{n}}}(A;p) } \cr
}$$
becomes an isomorphism after localization at $p$.
\end{prop}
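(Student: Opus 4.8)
The plan is to localize everything at $p$ — which is exact and so commutes with all the long exact sequences and spectral sequences in sight — and then to induct on the number of distinct prime divisors of $r'=r/p^{n-1}$, stripping them off one prime at a time by means of Proposition~\ref{fundamentallongexactsequence}. Three standard facts drive the argument. First, for every divisor $s$ of $r$ the composite $F_sV_s$ is multiplication by $s$: indeed $v_s\circ f_s$ is the projection $\mathbb{T}/C_t\to\mathbb{T}/C_r$ followed by its own transfer, hence the sum over the deck group $C_r/C_t$ of the translation maps of $\mathbb{T}/C_t$, and each such translation is $\mathbb{T}$-equivariantly homotopic to the identity because $\mathbb{T}$ is connected; thus, when $p\nmid s$, the map $\tfrac1sF_s$ is a retraction of $V_s$ after localization at $p$. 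Second, for a prime $\ell\neq p$ and an integer $u\geqslant 1$, the Borel homology $\mathbb{H}_*(C_{\ell^u},M)$ of a $\Z_{(p)}$-module $M$ with trivial action is concentrated in degree $0$, where it equals $M$: this follows from~(\ref{skeletonspectralsequence}) since $H_{>0}(C_{\ell^u};M)$ is both $\ell^u$-torsion and $\Z_{(p)}$-linear. Third, as recalled in~\cite[Section~1]{hm}, the norm map $N_{\ell^u}$ of Proposition~\ref{fundamentallongexactsequence} restricts on the edge $E^2_{0,\bullet}$ of~(\ref{skeletonspectralsequence}) to the Verschiebung $V_{\ell^u}$, and one has the commutation relations $F_sF_t=F_{st}$, $R_sR_t=R_{st}$, and $R_sF_t=F_tR_s$.

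For the induction, if $r'=1$ then $r=p^{n-1}$, the only index is $j=1$, and $\gamma$ is the identity. Otherwise fix a prime $\ell\neq p$ dividing $r$, put $a=v_\ell(r)$, and write $r=\ell^a s$ with $s=p^{n-1}m'$ and $m'=r'/\ell^a$, so that the prime-to-$p$ part of $s$ has one fewer prime divisor than that of $r$. For $b=1,\dots,a$ apply Proposition~\ref{fundamentallongexactsequence} at the prime $\ell$ with $u=b$ and $v=1$ to the group $\TR^{\ell^b s}_{q-\lambda_{(a-b)}}(A)$, where $\lambda_{(i)}:=\rho_{\ell^i}^*(\lambda^{C_{\ell^i}})$; a routine check gives $\rho_\ell^*(\lambda_{(i)}^{C_\ell})=\lambda_{(i+1)}$, so the restriction there is $R_\ell\colon\TR^{\ell^b s}_{q-\lambda_{(a-b)}}\to\TR^{\ell^{b-1}s}_{q-\lambda_{(a-b+1)}}$. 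By the second and third facts the sequence collapses, $p$-locally, to a short exact sequence $0\to\TR^s_{q-\lambda_{(a-b)}}(A)_{(p)}\xto{V_{\ell^b}}\TR^{\ell^b s}_{q-\lambda_{(a-b)}}(A)_{(p)}\xto{R_\ell}\TR^{\ell^{b-1}s}_{q-\lambda_{(a-b+1)}}(A)_{(p)}\to 0$, which by the first fact is split by the retraction $\tfrac1{\ell^b}F_{\ell^b}$. Telescoping these splittings over $b=a,a-1,\dots,1$ produces a $p$-local isomorphism from $\TR^r_{q-\lambda}(A)_{(p)}$ onto $\bigoplus_{c=0}^{a}\TR^s_{q-\lambda_{(c)}}(A)_{(p)}$ whose projection to the $c$th summand is $\tfrac1{\ell^{a-c}}F_{\ell^{a-c}}R_{\ell^c}$.

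Applying the inductive hypothesis to each summand identifies $\TR^s_{q-\lambda_{(c)}}(A)_{(p)}$ with $\prod_{d\mid m'}\TR^{p^{n-1}}_{q-(\lambda_{(c)})'_d}(A)_{(p)}$, the $d$th component being $R_{m'/d}F_d$. Composing and reindexing the pair $(c,d)$ by $j:=\ell^{a-c}d$ — which runs exactly over the divisors of $r'$, and for which $(\lambda_{(c)})'_d=\rho_{r'/j}^*(\lambda^{C_{r'/j}})=\lambda'_j$ after unravelling the iterated fixed-point and $\rho$ operations — yields a $p$-local isomorphism from $\TR^r_{q-\lambda}(A)_{(p)}$ to $\prod_{j\mid r'}\TR^{p^{n-1}}_{q-\lambda'_j}(A)_{(p)}$ whose $j$th component is $\tfrac1{\ell^{v_\ell(j)}}R_{m'/d}F_dF_{\ell^{a-c}}R_{\ell^c}$. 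Since $R_{\ell^c}$ commutes with $F_d$ and with $F_{\ell^{a-c}}$, this equals $\tfrac1{\ell^{v_\ell(j)}}R_{m'/d}R_{\ell^c}F_dF_{\ell^{a-c}}=\tfrac1{\ell^{v_\ell(j)}}R_{r'/j}F_j=\ell^{-v_\ell(j)}\gamma_j$, with $\ell^{-v_\ell(j)}$ a unit in $\Z_{(p)}$. Thus $\gamma$ differs, after localization at $p$, from the isomorphism just constructed by a diagonal change of coordinates by units, and is therefore itself an isomorphism after localization at $p$.

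The step I expect to be the main obstacle is exactly this last matching: checking that the isomorphism assembled from the iterated splittings agrees, up to $\Z_{(p)}$-units, with the prescribed map $\gamma$ whose components are the composites $R_{r'/j}\circ F_j$. This is pure bookkeeping — tracking how $\lambda$ is transformed under the iterated restrictions, identifying the index set of double indices $(c,d)$ with the divisors of $r'$, and invoking the commutation relations among $F$, $V$, $R$, and $N$ — but it requires no homotopy-theoretic input beyond the connectivity of $\mathbb{T}$ used in the first fact, the $p$-local vanishing of the higher homology of $\ell$-groups used in the second, and the identification of the norm on the edge of~(\ref{skeletonspectralsequence}) with the Verschiebung.
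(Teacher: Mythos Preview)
Your proof is correct and follows essentially the same strategy as the paper: both argue by induction, invoking Proposition~\ref{fundamentallongexactsequence} at a prime $\ell \neq p$ together with the identity $F_{\ell^k}V_{\ell^k} = \ell^k$ and the $p$-local collapse of $C_{\ell^k}$-Borel homology to produce splittings that are then matched with $\gamma$ via the commutation $R_sF_t = F_tR_s$. The only difference is organizational: the paper inducts on the number of divisors of $r'$ and shows directly that $(R_\ell, F_{\ell^v})$ with $v = v_\ell(r')$ is a $p$-local isomorphism, so that $\gamma = (\gamma \times \gamma)\circ(R_\ell, F_{\ell^v})$ reduces at once to two smaller cases, whereas you induct on the number of distinct prime divisors and telescope through the powers of $\ell$ one at a time---this makes the paper's bookkeeping for the factorization of $\gamma$ a bit shorter, but the content is the same.
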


\begin{proof}The proof is by induction on the number $k$ of positive
divisors of $r'$. If $k = 1$, or equivalently, if $r' = 1$ then
$\gamma$ is the identity map and the statement holds trivially. So we
let $k \geqslant 2$ and assume that the statement holds whenever $r'$
has at most $k-1$ divisors. Let $\ell$ be a prime divisor of $r'$,
and let $v = v_{\ell}(r') = v_{\ell}(r)$. We show that the map
$$(R_{\ell},F_{\ell^v}) \colon \TR_{q-\lambda}^r(A) \to
\TR_{q-\lambda'}^{r/\ell}(A) \times \TR_{q-\lambda}^{r/\ell^v}(A)$$
becomes an isomorphism after localization at $p$. This will prove the
induction step, since $r/\ell$ and $r/\ell^v$ have at most $k-1$
divisors and
$\gamma = (\gamma \times \gamma) \circ (R_{\ell},F_{\ell^v})$. Now, by
Proposition~\ref{fundamentallongexactsequence}, we have the long exact
sequence 
$$\cdots \to
\mathbb{H}_q(C_{\ell^v},
\TR_{\,\boldsymbol{\cdot}-\lambda}^{r/\ell^v}(A)) \xto{N_{\ell^v}}
\TR_{q-\lambda}^r(A) \xto{R_{\ell}}
\TR_{q-\lambda'}^{r/\ell}(A) \xto{\partial}
\mathbb{H}_{q-1}(C_{\ell^v},
\TR_{\,\boldsymbol{\cdot}-\lambda}^{r/\ell^v}(A)) \to \cdots$$
Moreover, one readily shows that the composite map
$$\TR_{q-\lambda}^{r/\ell^v}(A) \xto{\epsilon}
\mathbb{H}_q(C_{\ell^v},\TR_{\,\boldsymbol{\cdot}-\lambda}^{r/\ell^v}(A))
\xto{N_{\ell^v}}
\TR_{q-\lambda}^r(A) \xto{F_{\ell^v}}
\TR_{q-\lambda}^{r/\ell^v}(A),
$$
where $\epsilon$ is the edge homomorphism of the skeleton spectral
sequence~(\ref{skeletonspectralsequence}), is equal to the composition
$F_{\ell^v}V_{\ell^v}$ of the Frobenius and Verschiebung maps which,
in turn, is equal to the map given by multiplication by
$\ell^v$. Hence, after localization at $p$, $F_{\ell^v}$ is the
projection onto a direct summand of the group
$\TR_{q-\lambda}^r(Z)$. The long exact sequence shows that the map
$(R_{\ell},F_{\ell^v})$ becomes an isomorphism after localization at
$p$ as desired.
\end{proof}

The maps $F_s$, $V_s$, and $R_s$ may also be expressed as products of
their $p$-typical analogs
$$\begin{aligned}
F = F_p \colon & \operatorname{TR}_{q-\lambda}^n(A;p) \to
\operatorname{TR}_{q-\lambda}^{n-1}(A;p) 
\hskip7mm \text{(Frobenius)} \hfill\space \cr
V = V_p \colon & \operatorname{TR}_{q-\lambda}^{n-1}(A;p) \to
\operatorname{TR}_{q-\lambda}^n(A;p) 
\hskip7mm \text{(Verschiebung)} \hfill\space \cr
R = R_p \colon & \operatorname{TR}_{q-\lambda}^n(A;p) \to
\operatorname{TR}_{q-\lambda'}^{n-1}(A;p) \hskip6.3mm
\text{(restriction)} \hfill\space \cr
\end{aligned}$$
Suppose that $r = st$ and write $s = p^vs'$ and $t = p^{n-v-1}t'$ with
$s'$ and $t'$ not  divisible by $p$. Then there are three commutative
square diagrams
\begin{equation}\label{frobeniusverschiebungrestriction}
\xymatrix{
{ \operatorname{TR}_{q-\lambda}^r(A) } \ar[r]^(.4){\gamma}
\ar[d]<-1ex>_{F_s} & 
{ \prod_{j \mid r'} \operatorname{TR}_{q-\lambda'}^n(A;p) }
\ar[d]<-1ex>_{F_s^{\gamma}} &
{ \operatorname{TR}_{q-\lambda}^r(A) } \ar[r]^(.4){\gamma}
\ar[d]^{R_s} & 
{ \prod_{j \mid r'} \operatorname{TR}_{q-\lambda'}^n(A;p) }
\ar[d]^{R_s^{\gamma}} \cr
{ \operatorname{TR}_{q-\lambda}^t(A) } \ar[r]^(.4){\gamma}
\ar[u]<-1ex>_{V_s} & 
{ \prod_{j \mid t'} \operatorname{TR}_{q-\lambda'}^{n-v}(A;p) }
\ar[u]<-1ex>_{V_s^{\gamma}} & 
{ \operatorname{TR}_{q-\lambda'}^t(A) } \ar[r]^(.4){\gamma} & 
{ \prod_{j \mid t'} \operatorname{TR}_{q-\lambda''}^{n-v}(A;p), } \cr
}
\end{equation}
where the maps $F_s^{\gamma}$, $V_s^{\gamma}$, and $R_s^{\gamma}$ are
defined as follows: The map $F_s^{\gamma}$ takes the factor indexed by
a divisor $j$ of $r'$ that is divisible by $s'$ to the factor indexed
by the divisor $j/s'$ of $t'$ by the map $F^v$ and annihilates the
remaining factors. The map $V_s^{\gamma}$ takes the factor indexed by
the divisor $j$ of $t'$ to the factor indexed by the divisor $s'j$ of
$r'$ by the map $s'V^v$. Finally, the map $R_s^{\gamma}$ takes the
factor indexed by a divisor $j$ of $t'$ to the factor indexed by the
same divisor $j$ of $t'$ by the map $R^v$ and annihilates the factors
indexed by divisors $j$ of $r'$ that do not divide $t'$.

Let $M_p$ be the equivariant Moore spectrum defined by the mapping
cone of the multiplication by $p$ map on the sphere
$\mathbb{T}$-spectrum. The equivariant homotopy groups with
$\Z/p\Z$-coefficients
$$\TR_{q-\lambda}^n(\Z;p,\Z/p\Z) = 
[S^q \wedge (\mathbb{T}/C_{p^{n-1}})_+, M_p \wedge
S^{\lambda} \wedge T(\Z)]_{\mathbb{T}}$$
were evaluated for $p$ odd by Tsalidis~\cite{tsalidis1}, and for all
$p$ by the first and second authors~\cite{angeltveitgerhardt}. We
recall the result. 

\begin{theorem}[(Angeltveit-Gerhardt, Tsalidis)]\label{modp}Let $p$ be
a prime number, let $n$ be a positive integer, let $\lambda$ be a
finite dimensional complex $\mathbb{T}$-representation, and define
$$\delta_p(\lambda) = 
(1-p) \sum_{s \geqslant 0} \dim_{\C}(\lambda^{C_{p^s}}) p^s.$$
Then the finite $\Z_{(p)}$-modules $\TR_{q-\lambda}^n(\Z;p,\Z/p\Z)$
have the following structure:
\begin{enumerate}
\item[(i)] For $q \geqslant 2 \dim_{\C}(\lambda)$,
$\TR_{q-\lambda}^n(\Z;p,\Z/p\Z)$ has length $n$, if $q$ is congruent
to $2\delta_p(\lambda)$ or $2\delta_p(\lambda)-1$ modulo $2p^n$, and
$n-1$, otherwise.
\item[(ii)] For $2\dim_{\C}(\lambda^{C_{p^s}}) \leqslant q <
2\dim_{\C}(\lambda^{C_{p^{s-1}}})$ with $1 \leqslant s < n$,
$\TR_{q-\lambda}^n(\Z;p,\Z/p\Z)$ has length $n-s$, if $q$ is congruent 
to $2\delta_p(\lambda^{C_{p^s}})$ or $2\delta_p(\lambda^{C_{p^s}})-1$ modulo
$2p^{n-s}$, and $n-s-1$, otherwise.
\item[(iii)] For $q < 2\dim_{\C}(\lambda^{C_{p^{n-1}}})$,
$\TR_{q-\lambda}^n(\Z;p,\Z/p\Z)$ is zero.
\end{enumerate}
\end{theorem}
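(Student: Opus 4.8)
The plan, following Tsalidis~\cite{tsalidis1} and Angeltveit--Gerhardt~\cite{angeltveitgerhardt}, is to prove Theorem~\ref{modp} by induction on the level $n$, the inductive step being an application of the fundamental cofibration sequence of Proposition~\ref{fundamentallongexactsequence}. First I would dispose of the base case $n = 1$. Since $C_1$ is trivial and $\lambda$ is a complex, hence orientable, $\mathbb{T}$-representation, the Thom isomorphism identifies $\TR_{q-\lambda}^1(\Z;p,\Z/p\Z)$ with $\TR_{q - 2\dim_{\C}(\lambda)}^1(\Z;p,\Z/p\Z)$, and the universal coefficient theorem applied to the calculation of $\TR_q^1(\Z)$ of B\"{o}kstedt~\cite{bokstedt1} recalled above shows that this group is $\Z/p\Z$ when $q - 2\dim_{\C}(\lambda)$ is nonnegative and congruent to $0$ or $-1$ modulo $2p$, and zero otherwise. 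Since the terms in the defining sum for $\delta_p(\lambda)$ beyond the first carry a factor of $p$, we have $\delta_p(\lambda) \equiv \dim_{\C}(\lambda)$ modulo $p$, hence $2\delta_p(\lambda) \equiv 2\dim_{\C}(\lambda)$ modulo $2p$; so the base case is precisely statements~(i) and~(iii) for $n = 1$, with~(ii) vacuous.

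For the inductive step I would assume the theorem at level $n - 1$ for every complex $\mathbb{T}$-representation and feed $r = p^{n-1}$ into Proposition~\ref{fundamentallongexactsequence} with $u = n - 1$ and $v = 1$, obtaining the natural long exact sequence
$$\cdots \to \mathbb{H}_q(C_{p^{n-1}}, \TR_{\,\boldsymbol{\cdot}-\lambda}^1(\Z;p,\Z/p\Z)) \xto{N} \TR_{q-\lambda}^n(\Z;p,\Z/p\Z) \xto{R} \TR_{q-\lambda'}^{n-1}(\Z;p,\Z/p\Z) \xto{\partial} \mathbb{H}_{q-1}(C_{p^{n-1}}, \TR_{\,\boldsymbol{\cdot}-\lambda}^1(\Z;p,\Z/p\Z)) \to \cdots,$$
whose third term is known by the inductive hypothesis applied to $\lambda' = \rho_p^*\lambda^{C_p}$. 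The first term is the abutment of the skeleton spectral sequence~(\ref{skeletonspectralsequence}), whose $E^2$-page is $H_s(C_{p^{n-1}}, \TR_{t-\lambda}^1(\Z;p,\Z/p\Z))$ with coefficients supplied by the base case and with $H_s(C_{p^{n-1}}, \Z/p\Z)$ one-dimensional in each nonnegative degree $s$. When $q < 2\dim_{\C}(\lambda)$ these coefficient groups vanish in the range that contributes, so the Borel homology is zero and $R$ is an isomorphism; iterating, one reduces statements~(ii) and~(iii) to case~(i) at a lower level for a representation of the form $\rho_{p^s}^*\lambda^{C_{p^s}}$, using the identities $\dim_{\C}((\lambda')^{C_{p^s}}) = \dim_{\C}(\lambda^{C_{p^{s+1}}})$. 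In the complementary range $q \geqslant 2\dim_{\C}(\lambda)$ one must determine the differentials in~(\ref{skeletonspectralsequence}) — these truncate the polynomial tower generated by the B\"{o}kstedt periodicity class, leaving a periodic pattern of surviving classes — and then compute the maps $N$ and $\partial$; all of this is controlled by the multiplicative structure together with naturality for the reduction map $T(\Z) \to T(\mathbb{F}_p)$ and comparison with the known, and simpler, $\Z/p\Z$-coefficient computation for $T(\mathbb{F}_p)$ of Hesselholt--Madsen.

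It then remains to assemble the three pieces: the length of $\TR_{q-\lambda}^n(\Z;p,\Z/p\Z)$ is read off as the number of $\Z/p\Z$-composition factors coming from the cokernel of $\partial$ inside $\TR_{q-\lambda'}^{n-1}(\Z;p,\Z/p\Z)$ together with those coming from the Borel homology group. The reason the answer organizes itself around $\delta_p$ is the recursion
$$\delta_p(\lambda) = p\,\delta_p(\lambda') + (1-p)\dim_{\C}(\lambda),$$
a formal consequence of $\dim_{\C}((\lambda')^{C_{p^s}}) = \dim_{\C}(\lambda^{C_{p^{s+1}}})$: it is exactly what turns the congruence condition ``$q \equiv 2\delta_p(\lambda')$ or $2\delta_p(\lambda') - 1$ modulo $2p^{n-1}$'' at level $n - 1$ into ``$q \equiv 2\delta_p(\lambda)$ or $2\delta_p(\lambda) - 1$ modulo $2p^n$'' at level $n$ once the degree shift implicit in $R$ is taken into account, and applying it to $\lambda^{C_{p^s}}$ in place of $\lambda$ produces the subdivided ranges of statement~(ii). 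I expect the principal obstacle to be precisely this control of the differentials in~(\ref{skeletonspectralsequence}) and of the extensions in the long exact sequence — establishing that the surviving classes are exactly those predicted, that $N$ is injective on them, and that no additional collapse occurs, so that the lengths add up rather than drop. The prime $p = 2$ is a genuine secondary difficulty, since the Moore spectrum $M_2$ is not a ring spectrum and the relevant $\eta$-multiplications must be handled directly; this is why the odd-primary argument of Tsalidis is appreciably lighter than the uniform treatment of Angeltveit--Gerhardt.
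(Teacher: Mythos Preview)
The paper does not prove Theorem~\ref{modp}; it is stated there as a result quoted from~\cite{angeltveitgerhardt} and~\cite{tsalidis1} (``We recall the result''), and is then used as input in the proofs of Theorem~\ref{trthm} and its corollaries. So there is no in-paper proof to compare your proposal against.

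That said, your outline is a faithful sketch of the strategy in the cited references: the base case via the Thom isomorphism and B\"okstedt's calculation, the inductive step via the isotropy-separation/fundamental long exact sequence, and the recursion $\delta_p(\lambda) = p\,\delta_p(\lambda') + (1-p)\dim_{\C}(\lambda)$ are all correct and are exactly what organizes the answer. You are also right to flag the two genuinely hard points that your sketch does not resolve: determining the differentials in the skeleton (or Tate) spectral sequence and controlling the extensions/boundary map so that lengths add rather than drop. In~\cite{angeltveitgerhardt} this is handled not by the group-homology spectral sequence you wrote down but by a direct analysis of the Tate construction and the canonical map $\Gamma$ in the norm cofibration sequence, together with comparison to $T(\mathbb{F}_p)$; your outline gestures at this but would need substantial further argument to become a proof. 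For the purposes of the present paper, however, Theorem~\ref{modp} is a black box.
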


We show in Corollary~\ref{mod2} below that the groups
$\TR_{q-\lambda}^n(\Z;p,\Z/p\Z)$ have exponent $p$ for all prime
numbers $p$.

\begin{proof}[of Theorem~\ref{trthm}~(i)]By Lemma~\ref{rational},
$\TR_{2i-\lambda}^n(\Z;p)$ is a finitely generated abelian group, and
hence, it suffices to show that it is torsion free. We first show that
the $p$-torsion subgroup is trivial. Comparing Theorem~\ref{modp} and
Lemma~\ref{rational}, we find 
that for all integers $i$
$$\length_{\Z_{(p)}}\!\!\TR_{2i-\lambda}^n(\Z;p,\Z/p\Z)
- \length_{\Z_{(p)}}\!\!\TR_{2i-1-\lambda}^n(\Z;p,\Z/p\Z)
= \operatorname{rk}_{\Z} \TR_{2i-\lambda}^n(\Z;p).$$
Moreover, Lemma~\ref{rational} shows that for every integer $i$,
$\TR_{2i-1-\lambda}^n(\Z;p)_{(p)}$ is a finite $p$-primary torsion
group. By a Bockstein spectral sequence argument, we conclude that
$\TR_{2i-\lambda}^n(\Z;p)_{(p)}$ is torsion free;
compare~\cite[Proposition~13]{h7}. This shows that the group
$\TR_{2i-\lambda}^n(\Z;p)$ has no $p$-torsion. To see that it has no
prime to $p$ torsion, we use that, by
Proposition~\ref{ptypicaldecomposition}, the map
$$(R^{n-1-s}F^s) \colon \TR_{2i-\lambda}^n(\Z;p) \to
\prod_{0 \leqslant s < n} \TR_{2i-\lambda^{(n-1-s)}}^1(\Z;p)$$
becomes an isomorphism after inverting $p$. Therefore, B\"{o}kstedt's
result recalled earlier shows that also $\TR_{2i-\lambda}^n(\Z;p)[1/p]$
is torsion free. This completes the proof.
\end{proof}

\begin{lemma}\label{evensurjective}Let $p$ be a prime number, let $n$
be a positive integer, and let $\lambda$ be a finite dimensional
complex $\mathbb{T}$-representation. Then the restriction map
$$R \colon \TR_{q-\lambda}^n(\Z;p) \to
\TR_{q-\lambda'}^{n-1}(\Z;p)$$
is surjective for every even integer $q$.
\end{lemma}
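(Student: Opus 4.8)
The plan is to prove surjectivity of $R$ on even-degree homotopy by combining the long exact sequence of Proposition~\ref{fundamentallongexactsequence} (applied with $r = p^{n-1}$, $u = 1$, $v = n-1$) with the torsion-freeness statement of Theorem~\ref{trthm}~(i), which we have already established. Concretely, the relevant long exact sequence reads
$$\cdots \to \mathbb{H}_q(C_p,\TR_{\,\boldsymbol{\cdot}-\lambda}^{p^{n-2}}(\Z)) \xto{N_p} \TR_{q-\lambda}^n(\Z;p) \xto{R} \TR_{q-\lambda'}^{n-1}(\Z;p) \xto{\partial} \mathbb{H}_{q-1}(C_p,\TR_{\,\boldsymbol{\cdot}-\lambda}^{p^{n-2}}(\Z)) \to \cdots,$$
so $R$ is surjective in degree $q$ precisely when the connecting map $\partial$ vanishes on $\TR_{q-\lambda'}^{n-1}(\Z;p)$, equivalently when $\partial$ lands in the torsion subgroup $0$ of the target Borel homology group — but that is not quite enough, since the Borel homology group need not be torsion. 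So instead I would argue that the composite $\partial$ is zero because its source is already hit; more precisely, I would show that $R$ is surjective by identifying the image of $N_p$ with the kernel of $R$ and showing the long exact sequence forces $R$ onto.

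The cleaner route, which I would actually carry out, is to use the factorization of $R$ and the skeleton spectral sequence~(\ref{skeletonspectralsequence}) directly. First I would note that by Theorem~\ref{trthm}~(i), already proved, the groups $\TR_{2j-\lambda}^{n-1}(\Z;p)$ are free abelian and $\TR_{2j-1-\lambda}^{n-1}(\Z;p)$ are finite. Feeding this into the spectral sequence $E^2_{s,t} = H_s(C_p,\TR_{t-\lambda}^{p^{n-2}}(\Z)) \Rightarrow \mathbb{H}_{s+t}(C_p,\TR_{\,\boldsymbol{\cdot}-\lambda}^{p^{n-2}}(\Z))$, the even-total-degree part of the abutment $\mathbb{H}_{2i}$ receives contributions only from $E^2_{s,t}$ with $s+t = 2i$; the rows with $t$ odd consist of finite groups, and the key input is that the group-homology differentials and the free/finite dichotomy combine, together with an inductive hypothesis on $n$, to pin down the ranks exactly as in Lemma~\ref{rational}. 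Then a rank count in the long exact sequence: since $\TR_{2i-\lambda}^n(\Z;p)$ has rank equal to the number of $0 \le s < n$ with $i = \dim_{\C}(\lambda^{C_{p^s}})$, and $\TR_{2i-\lambda'}^{n-1}(\Z;p)$ has rank equal to the number of $0 \le s < n-1$ with $i = \dim_{\C}((\lambda')^{C_{p^s}}) = \dim_{\C}(\lambda^{C_{p^{s+1}}})$, the difference in free ranks is exactly the rank of $\mathbb{H}_{2i}(C_p,\TR_{\,\boldsymbol{\cdot}-\lambda}^{p^{n-2}}(\Z))$. This shows $R$ is rationally surjective; combined with the fact that its cokernel embeds in a Borel homology group and that the $p$-torsion is controlled by the mod-$p$ version of Theorem~\ref{modp}, one gets surjectivity on the nose after checking the mod-$p$ reduction is surjective.

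Therefore the concrete steps I would follow are: (1) write down the long exact sequence of Proposition~\ref{fundamentallongexactsequence} with $u=1$, $v=n-1$, $r=p^{n-1}$; (2) observe that it suffices to prove the connecting homomorphism $\partial\colon \TR_{2i-\lambda'}^{n-1}(\Z;p) \to \mathbb{H}_{2i-1}(C_p,\TR_{\,\boldsymbol{\cdot}-\lambda}^{p^{n-2}}(\Z))$ is zero; (3) reduce modulo $p$ using the universal coefficient sequence and the exponent-$p$ property asserted in Corollary~\ref{mod2}, so the problem becomes showing the corresponding mod-$p$ connecting map vanishes; (4) identify the mod-$p$ long exact sequence with a comparison of two of the spectral sequences of~\cite{angeltveitgerhardt}, where Theorem~\ref{modp} gives all the lengths, and count: the mod-$p$ Euler-characteristic identity $\length \TR_{2i-\lambda}^n - \length \TR_{2i-1-\lambda}^n = \operatorname{rk} \TR_{2i-\lambda}^n(\Z;p)$ established in the proof of Theorem~\ref{trthm}~(i) forces the relevant boundary map to vanish; (5) lift back to integral coefficients by a Bockstein/Nakayama argument. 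The main obstacle I expect is step~(4): making the length bookkeeping in Theorem~\ref{modp} line up cleanly across the jump values $2\dim_{\C}(\lambda^{C_{p^s}})$, since the formula for lengths changes regime exactly at even degrees, which is precisely where we need the surjectivity — so one has to be careful that the "boundary case" $q = 2\dim_{\C}(\lambda^{C_{p^s}})$ is handled by the right branch of the recursion, and this is where an induction on $n$ (peeling off one $R$ at a time via Proposition~\ref{ptypicaldecomposition}) is essential.
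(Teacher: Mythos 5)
Your proposal does not follow the paper's argument, and it has a genuine flaw: the circular use of Corollary~\ref{mod2}. In the paper's logical ordering, Corollary~\ref{mod2} is proved \emph{after} and \emph{by means of} Theorem~\ref{trthm}~(iii), whose proof in the odd case explicitly invokes Lemma~\ref{evensurjective}. So appealing to the exponent-$p$ statement of Corollary~\ref{mod2} in step~(3) to prove Lemma~\ref{evensurjective} is circular. There is also a bookkeeping error: to obtain the map $R = R_p\colon \TR_{q-\lambda}^n(\Z;p)\to\TR_{q-\lambda'}^{n-1}(\Z;p)$ from Proposition~\ref{fundamentallongexactsequence} with $r = p^{n-1}$, you need $v = 1$ and $u = n-1$, not $u=1$, $v = n-1$ as you write; your choice would produce $R_{p^{n-1}}\colon\TR^{n}_{q-\lambda}\to\TR^{1}_{q-\lambda'}$ and correspondingly $C_p$-Borel homology of $\TR^{p^{n-2}}$ rather than $C_{p^{n-1}}$-Borel homology of $\TR^1$, which in turn makes your rank-count ledger not balance. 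Finally, a rank count only yields surjectivity up to torsion; the cokernel of $R$ is a quotient of a free abelian group mapping to a torsion group, which can be nonzero, so the rational argument alone does not close the gap, and the remaining steps~(4)--(5) are too schematic to do so.

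The paper's actual argument is much shorter and avoids all of this. It splits on the size of $q$ relative to $2\dim_{\C}(\lambda)$. For $q\leqslant 2\dim_{\C}(\lambda)$, in the long exact sequence of Proposition~\ref{fundamentallongexactsequence} with $u = n-1$, $v=1$, the obstruction group $\mathbb{H}_{q-1}(C_{p^{n-1}},\TR_{\,\boldsymbol{\cdot}-\lambda}^1(\Z;p))$ vanishes outright: in the skeleton spectral sequence~(\ref{skeletonspectralsequence}) every contributing $E^2_{s,t}$ has $t\leqslant q-1 < 2\dim_{\C}(\lambda)$, and $\TR^1_{t-\lambda}(\Z;p)=0$ in that range by Lemma~\ref{rational}. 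For $q>2\dim_{\C}(\lambda)$ even, one instead notes that the \emph{target} $\TR_{q-\lambda'}^{n-1}(\Z;p)$ is itself zero: since $\dim_{\C}(\lambda')\leqslant\dim_{\C}(\lambda)<q/2$, Lemma~\ref{rational} gives rank zero, and Theorem~\ref{trthm}~(i) gives freeness, so the group vanishes. This two-case observation is the key idea missing from your plan; with it, neither the mod-$p$ reduction nor the Euler-characteristic bookkeeping is needed.
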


\begin{proof}We see as in the proof of Addendum~\ref{stabilization}
that the map of the statement is an epimorphism, for $q \leqslant 2
\dim_{\C}(\lambda)$. Moreover, Lemma~\ref{rational} and
Theorem~\ref{trthm}(i) show that $\TR_{q-\lambda'}^{n-1}(\Z;p)$ is 
zero, for $q > 2 \dim_{\C}(\lambda')$ and even. The lemma follows,
since $\dim_{\C}(\lambda') \leqslant \dim_{\C}(\lambda)$.
\end{proof}

\begin{prop}\label{oddsupport}Let $p$ be a prime number, let $n$ be a
positive integer, and let $\lambda$ be a finite dimensional complex
$\mathbb{T}$-representation. Then in the skeleton spectral sequence
$$E_{s,t}^2 = H_s(C_{p^{n-1}},\TR_{t-\lambda}^1(\Z;p))
\Rightarrow
\mathbb{H}_{s+t}(C_{p^{n-1}},\TR_{\,\boldsymbol{\cdot}-\lambda}^1(\Z;p)),$$ 
every non-zero differential $d^r \colon E_{s,t}^r \to E_{s-r,t+r-1}^r$
is supported in odd total degree.
\end{prop}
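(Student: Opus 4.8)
Here is a proof plan.

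\smallskip

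The plan is to read off the $E^2$-page from B\"okstedt's computation, observe that the columns carrying a nonzero coefficient group are extremely sparse, and then use the $2$-periodicity coming from the Tate cohomology of $C_{p^{n-1}}$ to force every nonzero differential into odd total degree. First I would make the $E^2$-page explicit. By the change-of-groups isomorphism $\TR_{t-\lambda}^1(\Z)\cong\TR_{t-2d}^1(\Z)$ with $d=\dim_\C(\lambda)$ used in the proof of Lemma~\ref{rational}, together with B\"okstedt's theorem recalled above, the coefficient group $M_t:=\TR_{t-\lambda}^1(\Z)$ equals $\Z$ for $t=2d$, is cyclic of order $j$ for $t=2d+2j-1$ with $j\geqslant1$, and is zero otherwise. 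Since $C_{p^{n-1}}$ acts trivially on $M_t$, the standard computation of the homology of a cyclic group gives $E_{0,t}^2=M_t$, $E_{s,t}^2=M_t/p^{n-1}M_t$ for $s$ odd, and $E_{s,t}^2=M_t[p^{n-1}]$ for $s$ even and positive. Hence the positions of even total degree carrying a nonzero group are $(0,2d)$, where the group is $\Z$, together with those $(s,t)$ with $s$ and $t$ both odd.

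Next I would cut down the possible differentials. A differential $d^r$ changes the $t$-coordinate by $r-1$, so, using that $M_t\neq0$ forces $t=2d$ or $t$ odd and that $\TR_1^1(\Z)=\TR_{2i}^1(\Z)=0$ for $i>0$, a parity check shows that $d^2$ vanishes identically and that, for $r\geqslant3$, a nonzero $d^r$ either (A) has $r$ even and source in the column $t=2d$, or (B) has $r$ odd and source in an odd column. In case (A) the column $t=2d$ has nonzero entries only at $(0,2d)$, which supports no differential since its target lies in negative filtration, and at $(s,2d)$ with $s$ odd, which has odd total degree; so there is nothing to prove. In case (B) the source $(s,t)$ has even total degree precisely when $s$ is odd, so it remains to show that $d^r$ vanishes on $E_{s,t}^r$ whenever $s$ and $t$ are odd and $r\geqslant3$ is odd.

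For this last case I would invoke the multiplicative structure. The skeleton spectral sequence is the homotopy orbit spectral sequence of $(S^\lambda\wedge T(\Z))_{hC_{p^{n-1}}}$, and since $T(\Z)$ is a module spectrum over $H\Z$, so is $S^\lambda\wedge T(\Z)$; passing to the Tate construction makes the associated Tate spectral sequence a module over the collapsing Tate spectral sequence of $(H\Z)^{tC_{p^{n-1}}}$, whose $E_2$-term is $\widehat{H}^*(C_{p^{n-1}};\Z)=\Z/p^{n-1}[u^{\pm1}]$ with $u$ a permanent cycle of cohomological degree $2$. Multiplication by $u^{\pm1}$ is then an isomorphism that shifts the homotopy orbit filtration by $\mp2$ and commutes with all differentials, and, via the connecting map of the norm cofibration sequence $X_{hC_{p^{n-1}}}\to X^{hC_{p^{n-1}}}\to X^{tC_{p^{n-1}}}$, it is compatible with the skeleton spectral sequence in all filtrations $\geqslant1$ (the two spectral sequences differing at filtration $0$ only by the term $\widehat{H}^{-1}(C_{p^{n-1}};M_t)=M_t[p^{n-1}]\hookrightarrow M_t$, which is still injective). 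Sliding an even-total-degree class down this periodicity to low filtration, where $d^r$ vanishes because its target lies in negative filtration, then gives the result.

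The step I expect to be the main obstacle is exactly this last one: the skeleton spectral sequence is only the non-positive-cohomological-filtration part of the $2$-periodic Tate spectral sequence, so the periodicity operator $u$ does not literally act on it near the edge, and the boundary differential $d^r\colon E_{r,t}^r\to E_{0,t+r-1}^r$ is not reached by sliding. One must transport it into the Tate spectral sequence along the connecting map, check that this map is injective on the relevant target, and then apply $u$-linearity there to reduce it to filtration $0$; pinning down the few surviving low-filtration differentials at this point is where the calculation of $\TR_{q-\lambda}^n(\Z;p,\Z/p\Z)$ recalled in Theorem~\ref{modp}, i.e.\ B\"okstedt periodicity, enters. (Alternatively, one could try to obtain the vanishing in case (B) at odd filtration by combining the long exact sequence of Proposition~\ref{fundamentallongexactsequence} with the freeness statement of Theorem~\ref{trthm}\,(i) for the even groups, but the sparseness-plus-periodicity argument above seems the more direct route.)
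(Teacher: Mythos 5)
Your reduction to the case $r$, $s$, $t$ all odd is correct and coincides (up to bookkeeping) with the opening paragraph of the paper's proof. The periodicity argument you propose for that remaining case, however, has a gap which you acknowledge but do not close, and I do not think it can be closed along the lines you sketch. The operator $u$ acts on the Tate spectral sequence, which is $2$-periodic in the filtration direction and therefore has no vanishing line; consequently, ``sliding a class down to low filtration, where $d^r$ vanishes because its target lies in negative filtration'' is never an available move \emph{inside} the Tate spectral sequence. The skeleton spectral sequence does have the vanishing line at filtration zero that you want, but it is not a module over $\widehat{H}^*(C_{p^{n-1}};\Z)$, so $u$-linearity and the filtration-zero boundary live in incompatible settings, and transporting along the norm cofiber sequence does not reconcile them: the boundary differential $d^r\colon E^r_{r,t}\to E^r_{0,t+r-1}$ is precisely the one you cannot reach. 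Your closing appeal to Theorem~\ref{modp} (a mod $p$ statement, brought to bear on an integral spectral sequence) to ``pin down the surviving low-filtration differentials'' is left undeveloped; note moreover that the Remark after Proposition~\ref{oddsupport} asserts that the Tate spectral sequence \emph{does} carry nonzero differentials, so no blanket vanishing is available there for you to propagate.

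The paper's proof uses naturality in $n$ rather than multiplicativity. The iterated Frobenius $F^{n'-n}$ between the spectral sequences for $C_{p^{n'-1}}$ and $C_{p^{n-1}}$ is, on the $E^2$-page, the transfer in group homology, which is surjective in odd filtration $s$ and eventually zero (as $n'-n\to\infty$) in even filtration $s$ when $t$ is odd. A simultaneous induction on $r$ then shows that $F$ stays surjective on $E^r_{s,t}$ for $s,t$ odd, and that for $r$ odd the square comparing $d^r$ over $n'$ with $d^r$ over $n$ has surjective left vertical $F^{n'-n}$ and, for $n'-n$ large, zero right vertical $F^{n'-n}$ (since the target sits in even filtration with $t$ odd); this forces the lower $d^r$ to vanish. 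This Frobenius/transfer mechanism is the essential idea missing from your approach.
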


\begin{proof}We must show that if $s + t$ is even then
$d^r \colon E_{s,t}^r \to E_{s-r,t+r-1}^r$ is zero. Suppose first that
$s$ and $t$ are both even. Then $E_{s,t}^2$ is zero unless $s = 0$
and $t = 2\dim_{\C}(\lambda)$. Therefore, in this case, $d^r \colon
E_{s,t}^r \to E_{s-r,t+r-1}^r$ is zero. Suppose next that $s$ and
$t$ are both odd and that $r$ is even. Then $E_{s,t}^2$ is zero
unless $t > 2\dim_{\C}(\lambda)$, and $E_{s-r,t+r-1}^2$ is zero
unless $t+r-1 = 2\dim_{\C}(\lambda)$. It follows that, also in this
case, $d^r \colon E_{s,t}^r \to E_{s-r,t+r-1}^r$ is zero. It remains
to prove that if $r$, $s$, and $t$ are all odd then $d^r \colon
E_{s,t}^r \to E_{s-r,t+r-1}^r$ is zero.

To this end, we use that, for all integers $n' \geqslant n
\geqslant 1$, the iterated Frobenius map 
$$F^{n'-n} \colon \mathbb{H}_q(C_{p^{n'-1}},
\TR_{\,\boldsymbol{\cdot}-\lambda}^1(\Z;p))
\to \mathbb{H}_q(C_{p^{n-1}}, \TR_{\,\boldsymbol{\cdot}-\lambda}^1(\Z;p))$$
induces a map of skeleton spectral sequences that we write
$$F^{n'-n} \colon E_{s,t}^r(n',\lambda) \to E_{s,t}^r(n,\lambda).$$
The map of $E^2$-terms is given by the transfer map in group homology
and is readily evaluated; see for instance~\cite[Lemma~6]{h7}. It is
surjective if $s$ is odd, and zero if is $s$ even, $t$ is odd, and $n'-n$
is sufficiently large. We  prove by induction on $r \geqslant 2$ that,
for all odd integers $s$ and $t$ and for all $n$ and $\lambda$ as in
the statement, the differential $d^r \colon E_{s,t}^r(n,\lambda) \to
E_{s-r,t+r-1}^r(n,\lambda)$ is zero and the Frobenius map $F \colon
E_{s,t}^r(n+1,\lambda) \to E_{s,t}^r(n,\lambda)$ surjective. The case
$r = 2$ was proved above. So we let $r \geqslant 3$ and assume,
inductively, that the statement has been proved for $r-1$. Since the
differential $d^{r-1} \colon E_{s,t}^{r-1}(n,\lambda) \to
E_{s-(r-1),t+r-2}^{r-1}(n,\lambda)$ is zero by the inductive
hypothesis, the canonical isomorphism
$$H( E_{s+r-1,t-(r-2)}^{r-1}(n,\lambda) \xto{d^{r-1}} 
E_{s,t}^{r-1}(n,\lambda) \xto{d^{r-1}} 
E_{s-(r-1),t+r-2}^{r-1}(n,\lambda) ) \xto{\sim} 
E_{s,t}^r(n,\lambda)$$
gives rise to a canonical surjection $\pi \colon
E_{s,t}^{r-1}(n,\lambda) \twoheadrightarrow
E_{s,t}^r(n,\lambda)$. Moreover, by naturality of the skeleton
spectral sequence, the diagram
$$\xymatrix{
{ E_{s,t}^{r-1}(n+1,\lambda) } \ar@{->>}[r]^{\pi} \ar[d]^{F} &
{ E_{s,t}^r(n+1,\lambda) } \ar[d]^{F} \cr
{ E_{s,t}^{r-1}(n,\lambda) } \ar@{->>}[r]^{\pi} &
{ E_{s,t}^r(n,\lambda) } \cr
}
$$
commutes. Since the left-hand vertical map $F$ is surjective by the
inductive hypothesis, we conclude that the right-hand vertical map $F$
is surjective as desired. It remains to prove that the differential
$d^r \colon E_{s,t}^r(n,\lambda) \to E_{s-r,t+r-1}^r(n,\lambda)$ is
zero. The case where $r$ is even was proved above, and in the case
where $r$ is odd, we consider the following commutative diagram:
$$\xymatrix{
{ E_{s,t}^r(n',\lambda) } \ar[r]^(.42){d^r} \ar[d]^{F^{n'-n}} &
{ E_{s-r,t+r-1}^r(n',\lambda) } \ar[d]^{F^{n'-n}} \cr
{ E_{s,t}^r(n,\lambda) } \ar[r]^(.42){d^r} &
{ E_{s-r,t+r-1}^r(n,\lambda) } \cr
}$$
We have just proved that the left-hand vertical map is
surjective. Moreover, as recalled above, the right-hand vertical map 
is zero if $n' - n$ is sufficiently large. Indeed, $s-r$ is even and
$t-r+1$ is odd. Hence, the lower horizontal map $d^r$ is zero as
desired. This proves the induction step and the proposition.
\end{proof}

\begin{remark}The proof of Proposition~\ref{oddsupport} also shows
that in the Tate spectral sequence
$$\hat{E}_{s,t}^2 = \hat{H}^{-s}(C_{p^{n-1}},\TR_{t-\lambda}^1(\Z;p))
\Rightarrow 
\hat{\mathbb{H}}^{-s-t}(C_{p^{n-1}},
\TR_{\,\boldsymbol{\cdot}-\lambda}^1(\Z;p)),$$ 
every non-zero differential is supported in even total degree. It
remains an important problem to determine the differential structure
of the two spectral sequences.
\end{remark}

\begin{proof}[of Theorem~\ref{trthm}~(ii)]First, for $i \leqslant
\dim_{\C}(\lambda)$, we see as in the proof of
Lemma~\ref{evensurjective} that the restriction map 
$$R \colon \TR_{q-\lambda}^n(\Z;p) \to \TR_{q-\lambda'}^{n-1}(\Z;p)$$
is an isomorphism, so the statement holds in this case. Next, for
$i = \dim_{\C}(\lambda)+1$,
Proposition~\ref{fundamentallongexactsequence} and
Lemmas~\ref{rational} and~\ref{evensurjective} give a short exact
sequence
$$0 \to 
\mathbb{H}_q(C_{p^{n-1}}, \TR_{\,\boldsymbol{\cdot}-\lambda}^1(\Z;p)) \to 
\TR_{q-\lambda}^n(\Z;p) \to \TR_{q-\lambda'}^{n-1}(\Z;p) \to 0$$
and the skeleton spectral sequence~(\ref{skeletonspectralsequence})
shows that the left-hand group has order $p^{n-1}$. So the statement
also holds in this case. Finally, for $i > \dim_{\C}(\lambda)+1$,
Proposition~\ref{fundamentallongexactsequence} and
Lemma~\ref{rational} give a four-term exact sequence
$$\begin{aligned}
0 & \to
\mathbb{H}_q(C_{p^{n-1}}, \TR_{\,\boldsymbol{\cdot}-\lambda}^1(\Z;p)) \to
\TR_{q-\lambda}^n(\Z;p) \to
\TR_{q-\lambda'}^{n-1}(\Z;p) \cr
{} & \to
\mathbb{H}_{q-1}(C_{p^{n-1}}, \TR_{\,\boldsymbol{\cdot}-\lambda}^1(\Z;p)) \to
0 \cr
\end{aligned}$$
which shows that the orders of the four groups satisfy the equality
$$\lvert \TR_{q-\lambda}^n(\Z;p) \rvert \big/ \lvert
\TR_{q-1-\lambda'}^{n-1}(\Z;p) \rvert = 
\lvert \mathbb{H}_q(C_{p^{n-1}},
\TR_{\,\boldsymbol{\cdot}-\lambda}^1(\Z;p)) \rvert \big/ \lvert
\mathbb{H}_{q-1}(C_{p^{n-1}},
\TR_{\,\boldsymbol{\cdot}-\lambda}^1(\Z;p)) \rvert.$$ 
To evaluate the ratio on the right-hand side, we consider the skeleton
spectral sequence~(\ref{skeletonspectralsequence}). We may write the
ratio in question as
$$\lvert \mathbb{H}_q(C_{p^{n-1}},
\TR_{\,\boldsymbol{\cdot}-\lambda}^1(\Z;p)) \rvert \big/ 
\lvert \mathbb{H}_{q-1}(C_{p^{n-1}},
\TR_{\,\boldsymbol{\cdot}-\lambda}^1(\Z;p)) \rvert =  
\big( \! \prod_{s+t = q} \lvert E_{s,t}^{\infty} \rvert \big) \big/
\big( \!\! \prod_{s+t = q-1} \lvert E_{s,t}^{\infty} \rvert \big).$$
By Proposition~\ref{oddsupport}, for all $r \geqslant 2$
and all $s$ and $t$ with $s+t$ is odd we have an exact sequence
$$0 \to E_{s,t}^{r+1} \to E_{s,t}^r \xto{d^r} E_{s-r,t+r-1}^r \to
E_{s-r,t+r-1}^{r+1} \to 0.$$
Hence, by induction on $r$, we find that
$$\big( \! \prod_{s+t = q} \lvert E_{s,t}^{\infty} \rvert \big) \big/
\big( \!\! \prod_{s+t = q-1} \lvert E_{s,t}^{\infty} \rvert \big) 
= \big( \! \prod_{s+t = q} \lvert E_{s,t}^2 \rvert \big) \big/
\big( \!\! \prod_{s+t = q-1} \lvert E_{s,t}^2 \rvert \big)$$
and the ratio on the right-hand side is readily seen to be equal to
$$\lvert E_{0,q}^2 \rvert \cdot \lvert
E_{q-2\dim_{\C}(\lambda),2\dim_{\C}(\lambda)}^2 \rvert 
= (i-\dim_{\C}(\lambda)) \cdot p^{n-1}.$$
This completes the proof.
\end{proof}

\begin{proof}[of Theorem~\ref{trthm}~{\rm(iii)}]First, for $q$ odd, we
use that the Verschiebung map in question is equal to the
composition of the edge homomorphism 
$$\epsilon \colon \TR_{q-\lambda}^{n-1}(\Z;p) \to
\mathbb{H}_q(C_p,\TR_{\,\boldsymbol{\cdot}-\lambda}^{n-1}(\Z;p))$$
of the skeleton spectral sequence~(\ref{skeletonspectralsequence}) and
the norm map $N_{n-1}$ in the long exact sequence
$$\xymatrix{
{ \cdots } \ar[r] &
{ \mathbb{H}_q(C_p,\TR_{\boldsymbol{\cdot}-\lambda}^{n-1}(\Z;p)) }
\ar[r]^(.58){N_{n-1}} &
{ \TR_{q-\lambda}^n(\Z;p) } \ar[r]^(.45){R^{n-1}} &
{ \TR_{q-\lambda^{(n-1)}}^1(\Z;p) } \ar[r] &
{ \cdots } \cr
}$$
from Proposition~\ref{fundamentallongexactsequence}. Since $q$ is odd,
Lemma~\ref{evensurjective} shows that the latter map is
injective. Hence, it will suffice to show that also the edge
homomorphism is injective, or equivalently, that in the skeleton
spectral sequence, all differentials of the form
$$d^r \colon E_{r,q-r+1}^r \to E_{0,q}^r$$
are zero. If $r$ is even, then $q-r+1$ is even and
Theorem~\ref{trthm}~(i) shows that the group $E_{r,q-r+1}^r$ is
zero. Hence, $d^r$ is zero in this case. If $r$ is odd, we consider
the iterated Frobenius map
$$F^{v'-v} \colon
\mathbb{H}_q(C_{p^{v'}},\TR_{\boldsymbol{\cdot}-\lambda}^{n-1}(\Z;p))
\to \mathbb{H}_q(C_{p^v},\TR_{\boldsymbol{\cdot}-\lambda}^{n-1}(\Z;p)).$$
It induces a map of spectral sequences that we write
$$F^{v'-v} \colon E_{s,t}^r(v',\lambda) \to E_{s,t}^r(v,\lambda).$$
As in the proof of Theorem~\ref{trthm}~(ii), an induction on $r
\geqslant 2$ shows that, for all $v' \geqslant v \geqslant 1$, the
left-hand vertical map and the horizontal maps in the diagram 
$$\xymatrix{
{ E_{r,q-r+1}^r(v',\lambda) } \ar[r]^(.55){d^r} \ar[d]^{F^{v'-v}} &
{ E_{0,q}^r(v',\lambda) } \ar[d]^{F^{v'-v}} \cr
{ E_{r,q-r+1}^r(v,\lambda) } \ar[r]^(.55){d^r} &
{ E_{0,q}^r(v,\lambda) } \cr
}$$
are surjective and zero, respectively. The proof of the induction
step uses that, for $v' - v$ sufficiently large, the right-hand
vertical map is zero. This proves the statement for $q$ odd.  

Finally, suppose that $q$ is even. We let $\mu$ be the
direct sum of $\lambda$ and the one dimensional complex representation
$\C(p^{n-2})$. Then~\cite[Proposition~4.2]{h6} gives a
long exact sequence
$$\cdots \to \TR_{q+1-\mu}^n(\Z;p) \xto{(F,-Fd)}
\overset{ \displaystyle{ \TR_{q-1-\lambda}^{n-1}(\Z;p) }}{ \underset{
\displaystyle{ \TR_{q-\lambda}^{n-1}(\Z;p) }}{ \oplus }} \xto{dV+V}
\TR_{q-\lambda}^n(\Z;p) \xto{\iota_*}
\TR_{q-\mu}^n(\Z;p) \to \cdots$$
Now, we proved in Thm~\ref{trthm}~(i) that $\TR_{q+1-\mu}^n(\Z;p)$ and
$\TR_{q-1-\lambda}^{n-1}(\Z;p)$ are finite abelian groups while
$\TR_{q-\lambda}^{n-1}(\Z;p)$, $\TR_{q-\lambda}^n(\Z;p)$, and
$\TR_{q-\mu}^n(\Z;p)$ are free abelian groups. Hence, we obtain the
exact sequence of free abelian groups
$$\xymatrix{
{ 0 } \ar[r] &
{ \TR_{q-\lambda}^{n-1}(\Z;p) } \ar[r]^{V} &
{ \TR_{q-\lambda}^n(\Z;p) } \ar[r]^{\iota_*} &
{ \TR_{q-\mu}^n(\Z;p) } \cr
}$$
which shows that for $q$ even, the Verschiebung map is the inclusion
of a direct summand as stated. This concludes the proof of
Theorem~\ref{trthm}.
\end{proof}

\begin{cor}\label{mod2}Let $n$ be a positive integer, $p$ a prime
number, and $\lambda$ a finite dimensional complex
$\mathbb{T}$-representation. Then $\TR_{q-\lambda}^n(\Z;p,\Z/p\Z)$ has
exponent $p$ for every integer $q$. 
\end{cor}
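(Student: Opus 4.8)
The plan is to run the Bockstein long exact sequence against the structure results of Theorem~\ref{trthm}. The equivariant Moore spectrum sits in a cofibration sequence of orthogonal $\mathbb{T}$-spectra $\mathbb{S}\xto{p}\mathbb{S}\xto{\iota}M_p\xto{\partial}\Sigma\mathbb{S}$, and smashing with $S^{\lambda}\wedge T(\Z)$ and applying $[S^q\wedge(\mathbb{T}/C_{p^{n-1}})_+,-]_{\mathbb{T}}$ gives, for every integer $q$, a long exact sequence
$$\cdots\to\TR_{q-\lambda}^n(\Z;p)\xto{p}\TR_{q-\lambda}^n(\Z;p)\xto{\iota_*}\TR_{q-\lambda}^n(\Z;p,\Z/p\Z)\xto{\partial}\TR_{q-1-\lambda}^n(\Z;p)\to\cdots.$$
If $p$ is odd then $p\cdot\operatorname{id}_{M_p}=0$, so multiplication by $p$ on $\TR_{q-\lambda}^n(\Z;p,\Z/p\Z)$, being induced by $p\cdot\operatorname{id}_{M_p}$ smashed with $S^{\lambda}\wedge T(\Z)$, is zero for every $q$. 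This settles all odd primes, so I would then concentrate on $p=2$.

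For $p=2$ I would argue according to the parity of $q$. If $q$ is odd then $q-1$ is even, so $\TR_{q-1-\lambda}^n(\Z;2)$ is free abelian by Theorem~\ref{trthm}(i); hence multiplication by $2$ is injective on it, so by exactness of the sequence above $\partial=0$ and $\iota_*$ is surjective. Therefore $\TR_{q-\lambda}^n(\Z;2,\Z/2\Z)\cong\TR_{q-\lambda}^n(\Z;2)/2\TR_{q-\lambda}^n(\Z;2)$ has exponent $2$. If $q$ is even I would instead use the classical relation $2\cdot\operatorname{id}_{M_2}=\iota\circ\eta\circ\partial$, where $\eta$ is the stable Hopf class; smashing with $S^{\lambda}\wedge T(\Z)$ it exhibits multiplication by $2$ on $\TR_{q-\lambda}^n(\Z;2,\Z/2\Z)$ as the composite
$$\TR_{q-\lambda}^n(\Z;2,\Z/2\Z)\xto{\partial}\TR_{q-1-\lambda}^n(\Z;2)\xto{\eta}\TR_{q-\lambda}^n(\Z;2)\xto{\iota_*}\TR_{q-\lambda}^n(\Z;2,\Z/2\Z).$$
Since $q$ is even, $\TR_{q-\lambda}^n(\Z;2)$ is free abelian by Theorem~\ref{trthm}(i), while the image of the middle map is annihilated by $2$ because $2\eta=0$; a subgroup of a free abelian group consisting of $2$-torsion elements is trivial, so the middle map, and hence multiplication by $2$ on $\TR_{q-\lambda}^n(\Z;2,\Z/2\Z)$, vanishes. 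Together the two parities give the corollary.

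The point that will need the most care is the identity $2\cdot\operatorname{id}_{M_2}=\iota\circ\eta\circ\partial$, and in particular its validity in the homotopy category of orthogonal $\mathbb{T}$-spectra. Non-equivariantly it is a well-known property of the mod $2$ Moore spectrum. Equivariantly it follows because $M_2$ is the smash product of the sphere $\mathbb{T}$-spectrum with the non-equivariant mod $2$ Moore spectrum equipped with the trivial $\mathbb{T}$-action --- equivalently, it is obtained by applying the exact, strong symmetric monoidal inflation functor along $\mathbb{T}\to\{e\}$ --- and this functor carries the non-equivariant relation, as well as the identity $2\eta=0$, to the required statements. Once this is granted the remainder of the argument is purely formal, and in the write-up a reference for the Moore-spectrum identity would suffice.
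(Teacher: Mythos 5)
Your proof is correct and follows essentially the same route as the paper: the Bockstein long exact sequence together with the Araki--Toda relation $2\cdot\operatorname{id}_{M_2}=\iota\circ\eta\circ\partial$ and the parity dichotomy of Theorem~\ref{trthm}. The only cosmetic difference is that for $q$ odd you conclude via surjectivity of $\iota_*$ rather than the vanishing of $\beta$ in the Araki--Toda composite, and you additionally spell out why the Araki--Toda identity holds in the equivariant category via inflation, a point the paper leaves implicit when citing~\cite{arakitoda}.
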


\begin{proof}We first let $p = 2$ and consider the coefficient
long exact sequence
$$\cdots \to \TR_{q-\lambda}^n(\Z;2) \xto{2} 
\TR_{q-\lambda}^n(\Z;2) \xto{\iota} 
\TR_{q-\lambda}^n(\Z;2,\Z/2\Z) \xto{\beta}
\TR_{q-1-\lambda}^n(\Z;2) \to \cdots.$$
It is proved in~\cite[Theorem~1.1]{arakitoda} that the composition
$$\TR_{q-\lambda}^n(\Z;2,\Z/2\Z) \xto{\beta} \TR_{q-1-\lambda}^n(\Z;2)
\xto{\eta} \TR_{q-\lambda}^n(\Z;2) \xto{\iota}
\TR_{q-\lambda}^n(\Z;2,\Z/2\Z)$$
is equal to multiplication by $2$. Now, Theorem~\ref{trthm} shows
that for $q$ odd the map $\beta$ is zero, and that for $q$ even the
map $\eta$ is zero. Hence, the group $\TR_{q-\lambda}^n(\Z;2,\Z/2\Z)$ is
annihilated by multiplication by $2$ as stated. Finally, for $p$
odd,~\cite[Theorem~1.1]{arakitoda} shows that the multiplication by
$p$ map on $\TR_{q-\lambda}^n(\Z;p,\Z/p\Z)$ is equal to zero.
\end{proof}

\section{The groups $K_q(\Z[x]/(x^m),(x))$}\label{ksection}

In this section, we prove Theorem~\ref{kthm} of the introduction. 

\begin{prop}\label{middleterm}Let $m$ and $r$ be positive integers,
let $i$ be a non-negative integer, and let $d = d(m,r)$ be the integer
part of $(r-1)/m$. Then:
\begin{enumerate}
\item[(i)]The abelian group $\lim_R \TR_{2i-\lambda_d}^r(\Z)$ is free
of rank $m$.
\item[(ii)]The abelian group $\lim_R \TR_{2i-1-\lambda_d}^r(\Z)$ is
finite of order $(mi)!(i!)^m$. 
\end{enumerate}
\end{prop}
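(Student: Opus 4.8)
The plan is to use Addendum~\ref{stabilization} to replace the two inverse limits in the statement by a single group $\TR_{q-\lambda_d}^r(\Z)$ for a conveniently chosen $r$, then to read off the rank from Lemma~\ref{rational} and the order by splitting the group $p$-locally into $p$-typical pieces via Proposition~\ref{ptypicaldecomposition} and invoking Theorem~\ref{trthm}. Everything then reduces to two elementary divisor counts.

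First I fix $r$. Choose a large integer $N$ and let $r$ be divisible by $p^N$ for every prime $p \leqslant m(i+1)$ and by no other prime. For $N$ large this guarantees: (a) $m(i+1) < p^{v_p(r)+1}$ for every prime $p$, so Addendum~\ref{stabilization}(i) applies both to $q = 2i$ (with $\epsilon = 0$ and index $i$) and to $q = 2i-1 = 2(i-1)+1$ (with $\epsilon = 1$ and index $i-1$, the proviso then reading $mi < p^{v_p(r)+1}$), identifying the two limits with $\TR_{2i-\lambda_d}^r(\Z)$ and $\TR_{2i-1-\lambda_d}^r(\Z)$; (b) $m \mid r$, hence $d = d(m,r) = r/m-1$; (c) every integer $f$ with $1 \leqslant f \leqslant m(i+1)$ divides $r$; and (d) $r > m^2(i+1)$. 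Since $\dim_{\C}(\lambda_d^{C_e}) = \lfloor d/e \rfloor$ and, for $f = r/e$, one has $d/e = f/m - f/r$, conditions (b) and (d) yield $\lfloor d/e\rfloor = \lceil f/m\rceil - 1$ for every divisor $e \geqslant m$ of $r$, while $\lfloor d/e\rfloor > i$ for $e < m$.

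For part~(i): by Lemma~\ref{rational}, $\TR_{2i-\lambda_d}^r(\Z)$ is finitely generated of rank equal to the number of divisors $e \mid r$ with $\lfloor d/e\rfloor = i$, that is, with $\lceil (r/e)/m\rceil = i+1$, that is, with $mi < r/e \leqslant m(i+1)$; by~(c) these are exactly $e = r/f$ for $f \in \{mi+1,\dots,m(i+1)\}$, so the rank is $m$. The group, and hence the limit, is free because $\TR_{2i-\lambda_d}^r(\Z)_{(p)}$ splits by Proposition~\ref{ptypicaldecomposition} as a product of the free $\Z_{(p)}$-modules of Theorem~\ref{trthm}(i), so $\TR_{2i-\lambda_d}^r(\Z)$ is torsion free, hence free.

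For part~(ii): $\TR_{2i-1-\lambda_d}^r(\Z)$ is finite by Lemma~\ref{rational}. Write $r = p^{n-1}r'$ with $p \nmid r'$. Proposition~\ref{ptypicaldecomposition} gives $\TR_{2i-1-\lambda_d}^r(\Z)_{(p)} \cong \prod_{j \mid r'} \TR_{2i-1-\mu_j}^n(\Z;p)_{(p)}$ with $\mu_j = \rho_{r'/j}^*(\lambda_d^{C_{r'/j}})$, which one identifies with $\lambda_{\lfloor dj/r'\rfloor}$, so that $\dim_{\C}(\mu_j^{C_{p^s}}) = \lfloor d/e\rfloor$ for $e = (r'/j)p^s$. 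Unrolling the recursion of Theorem~\ref{trthm}(ii) yields
$$\lvert \TR_{2i-1-\mu_j}^n(\Z;p) \rvert = \prod_{\substack{0 \leqslant s < n \\ \lfloor d/e\rfloor < i}} p^{\,n-1-s}\bigl(i - \lfloor d/e\rfloor\bigr), \qquad e = (r'/j)p^s.$$
As $(j,s)$ varies, $e = (r'/j)p^s$ runs exactly once over the divisors of $r$, with $n-1-s = v_p(r/e)$. Passing to $p$-primary parts and then multiplying over all primes $p$, the factors $i-\lfloor d/e\rfloor$ and $p^{v_p(r/e)}$ recombine into
$$\lvert \TR_{2i-1-\lambda_d}^r(\Z) \rvert = \prod_{\substack{e \mid r \\ \lfloor d/e\rfloor < i}} \frac{r}{e}\,\bigl(i - \lfloor d/e\rfloor\bigr).$$
Substituting $f = r/e$ and using $\lfloor d/e\rfloor = \lceil f/m\rceil - 1$ (valid on the relevant range, and $\lfloor d/e\rfloor > i$ forces $e \geqslant m$), the condition becomes $f \leqslant mi$, all such $f$ divide $r$ by~(c), and the factor becomes $f\,(i+1-\lceil f/m\rceil)$; grouping $f \in \{1,\dots,mi\}$ by $k = \lceil f/m\rceil \in \{1,\dots,i\}$, each value occurring for $m$ values of $f$, gives $\bigl(\prod_{f=1}^{mi} f\bigr)\prod_{k=1}^{i}(i+1-k)^m = (mi)!\,(i!)^m$. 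The one step requiring care is the recombination: the groups $\TR_{q-\lambda}^n(\Z;p)$ are not themselves $p$-local, so one must take $p$-primary parts before multiplying over primes, while keeping track of the bijection $(j,s)\leftrightarrow e$ and of the identification $\mu_j = \lambda_{\lfloor dj/r'\rfloor}$; the rest is the two divisor counts above.
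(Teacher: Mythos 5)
Your proof is correct, and it takes a route that differs from the paper's in a meaningful way. Both arguments ultimately reduce to the same two divisor counts and both depend on Proposition~\ref{ptypicaldecomposition} and Theorem~\ref{trthm}, but they handle the inverse limit in the opposite order. The paper first localizes $\lim_R \TR_{q-\lambda_d}^r(\Z)$ at a prime $p$, decomposes it via Proposition~\ref{ptypicaldecomposition} into a product over $j \in I_p$ of $p$-typical limits $\lim_R \TR_{q-\lambda_d}^n(\Z;p)_{(p)}$, and then stabilizes each factor separately by invoking Lemma~2.6 of~\cite{h6}, landing on $\TR_{q-\lambda_d}^{s}(\Z;p)_{(p)}$ with $s = s_p(m,i,j)$ depending on $j$. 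You instead stabilize once and for all: you pick one large $r$, use Addendum~\ref{stabilization}(i) --- which is internal to the paper --- to identify the whole limit with the single group $\TR_{q-\lambda_d}^r(\Z)$, and only afterwards decompose $p$-locally. Your route avoids the external reference to Lemma~2.6 of~\cite{h6}, reads off the rank from the more elementary Lemma~\ref{rational} (reserving Theorem~\ref{trthm}(i) for freeness), and produces the pleasant closed formula $\lvert\TR_{2i-1-\lambda_d}^r(\Z)\rvert = \prod_{e \mid r,\ \lfloor d/e\rfloor < i}(r/e)\bigl(i-\lfloor d/e\rfloor\bigr)$ as a by-product; the paper's route has the advantage of never having to commit to a particular $r$ and of treating each prime symmetrically from the outset. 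The one small thing to tidy is the degenerate case $m=1$, $i=0$, where your prescription gives $r=1$ and condition~(d) fails; this case is trivial directly, but to keep the argument uniform you should require $r$ to be divisible by $p^N$ for all primes $p \leqslant \max(2,\,m(i+1))$.
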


\begin{proof}It follows from Lemma~\ref{rational} and
Addendum~\ref{stabilization} that the groups $\lim_R
\TR_{q-\lambda_d}^r(\Z)$ are finitely generated. Hence, it suffices to
show that for every prime number $p$, the $\Z_{(p)}$-module $\lim_R 
\TR_{2i-\lambda_d}^r(\Z)_{(p)}$ has finite rank $m$ and the
$\Z_{(p)}$-module $\lim_R \TR_{2i-1-\lambda_d}^r(\Z)_{(p)}$ has finite
length $v_p((mi)!(i!)^m)$. We fix a prime number $p$ and let $I_p$ be
the set of positive integers not divisible by $p$. It follows from
Proposition~\ref{ptypicaldecomposition} that there is a canonical
isomorphism
$$\lim_R \TR_{q-\lambda_d}^r(\Z)_{(p)} \xto{\sim}
\prod_{j \in I_p} \lim_R \TR_{q-\lambda_d}^n(\Z;p)_{(p)},$$
where, on the left-hand side, the limit ranges over the set of
positive integers ordered under division and $d = d(m,r)$, and where,
on the right-hand side, the limits range over the set of non-negative
integers ordered additively and $d = d(m,p^{n-1}j)$. Moreover, on the
$j$th factor of the  product, the canonical projection
$$\lim_R \TR_{q-\lambda_d}^n(\Z;p)_{(p)} \to 
\TR_{q-\lambda_d}^s(\Z;p)_{(p)}$$
is an isomorphism for $q < 2d(m,p^sj)$; see~\cite[Lemma~2.6]{h6}. The
requirement that $2i-2$ and $2i-1$ be strictly smaller than
$2d(m,p^sj)$ is equivalent to the requirement that $mi < p^sj$. Hence,
for $q = 2i-2$ or $q = 2i-1$, we have a canonical isomorphism
$$\lim_R \TR_{q-\lambda_d}^r(\Z)_{(p)} \xto{\sim}
\prod_{\substack{1 \leqslant j \leqslant mi \\ j \in I_p}}
\TR_{q-\lambda_d}^s(\Z;p)_{(p)},$$
where $s = s_p(m,i,j)$ is the unique integer such that
$$p^{s-1}j \leqslant mi < p^sj.$$
Now, from Theorem~\ref{trthm}~(i) we find that
$$\begin{aligned}
{} & \operatorname{rk}_{\Z_{(p)}} \lim_R \TR_{2i-\lambda_d}^r(\Z)_{(p)} 
= \lvert \{ j \in I_p \mid \text{$i = d(m,p^{n-1}j)$, for some 
$n \geqslant 1$} \} \rvert \cr 
{} & = \lvert \{ r \in \N \mid i = d(m,r) \} \rvert
 = \lvert \{mi+1, mi+2, \dots, mi+m\} \rvert = m \cr
\end{aligned}$$
which proves statement~(i). Similarly, we have
$$\length_{\Z_{(p)}} \lim_R \TR_{2i-1-\lambda_d}^r(\Z)_{(p)} 
= \sum_{\substack{1 \leqslant j \leqslant mi\\ j \in I_p}}
\length_{\Z_{(p)}} \TR_{2i-1-\lambda_d}^s(\Z;p)_{(p)},$$
where $s = s_p(m,i,j)$, and Theorem~\ref{trthm}~(ii) shows that the
right-hand side is equal to
$$\begin{aligned}
{} & \sum_{\substack{1 \leqslant j \leqslant mi \\ j \in I_p}}
\sum_{1 \leqslant t \leqslant s} \big( v_p(i-d(m,p^{t-1}j)) + t-1 \big) 
= \sum_{1 \leqslant k \leqslant mi} \big( v_p(i-d(m,k)) + v_p(k) \big) \cr
{} & = m \sum_{0 \leqslant l < i} v_p(i-l) + \!\! \sum_{1 \leqslant k
  \leqslant mi} v_p(k) 
= m \sum_{1 \leqslant k \leqslant i} v_p(k) + \!\! \sum_{1 \leqslant k
  \leqslant mi} v_p(k) = v_p((i!)^m(mi)!) \cr
\end{aligned}$$
which proves statement~(ii). 
\end{proof}

\begin{prop}\label{lefthandterm}Let $m$ and $r$ be positive integers,
let $i$ be a non-negative integer, and let $d = d(m,r)$ be the integer
part of $(r-1)/m$. Then:
\begin{enumerate}
\item[(i)]The abelian group $\lim_R \TR_{2i-\lambda_d}^{r/m}(\Z)$ is
free of rank $1$.
\item[(ii)]The abelian group $\lim_R \TR_{2i-1-\lambda_d}^{r/m}(\Z)$ is
finite of order $(i!)^2$. 
\end{enumerate}
\end{prop}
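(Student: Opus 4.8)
The plan is to follow the strategy of Proposition~\ref{middleterm} verbatim, replacing the limit over all positive integers $r$ divisible by nothing with the limit over integers of the form $r/m$, i.e.\ the same limit but with the truncated representation $\lambda_d$ held fixed by $d = d(m,r)$ while the level ranges over divisors of $r/m$. First I would observe, using Lemma~\ref{rational} and Addendum~\ref{stabilization}~(ii) in place of~(i), that the groups $\lim_R \TR_{q-\lambda_d}^{r/m}(\Z)$ are finitely generated, so it suffices to work one prime $p$ at a time and compute the rank of $\lim_R \TR_{2i-\lambda_d}^{r/m}(\Z)_{(p)}$ and the length of $\lim_R \TR_{2i-1-\lambda_d}^{r/m}(\Z)_{(p)}$. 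Writing $r/m = p^{n-1}j$ with $j \in I_p$, Proposition~\ref{ptypicaldecomposition} gives a canonical isomorphism
$$\lim_R \TR_{q-\lambda_d}^{r/m}(\Z)_{(p)} \xto{\sim}
\prod_{j \in I_p} \lim_R \TR_{q-\lambda_d}^n(\Z;p)_{(p)},$$
where now $d = d(m, m p^{n-1} j) = d(m, r)$, and on the $j$th factor the projection to level $s$ is an isomorphism once $q < 2 d(m, m p^s j)$; the point is that $d(m, m p^s j) = \lfloor (m p^s j - 1)/m \rfloor = p^s j - 1$ exactly (since $m \nmid mp^sj - 1$ for $m \geq 1$, the floor drops by one), so the condition $2i - 1 < 2(p^s j - 1)$ becomes simply $i < p^s j$, i.e.\ $i \leqslant p^s j - 1$.

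Next I would truncate the product to the finitely many $j$ that contribute. The condition that the projection fails to be an isomorphism at all levels is that there is no $s \geqslant 0$ with $i < p^s j$, which never happens for $j \geqslant 1$ once $s$ is large; so for each $j \in I_p$ with $1 \leqslant j \leqslant i$ there is a unique $s = s_p(i,j)$ with $p^{s-1} j \leqslant i < p^s j$, and for $j > i$ already the level-$1$ term computes the limit with $d = d(m, mj) = j - 1 \geqslant i$, so $\TR_{q-\lambda_d}^1(\Z;p)$ vanishes for $q = 2i - 2$ and is as computed for $q = 2i - 1$ — actually I would phrase it exactly as in Proposition~\ref{middleterm}, getting
$$\lim_R \TR_{q-\lambda_d}^{r/m}(\Z)_{(p)} \xto{\sim}
\prod_{\substack{1 \leqslant j \leqslant i \\ j \in I_p}}
\TR_{q-\lambda_d}^s(\Z;p)_{(p)}, \qquad s = s_p(i,j),$$
for $q = 2i - 2$ or $q = 2i - 1$. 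For part~(i), Theorem~\ref{trthm}~(i) computes the rank as the number of $r' \in \N$ with $i = d(m, m r') = r' - 1$, which forces $r' = i + 1$, a single value — hence rank $1$. For part~(ii), Theorem~\ref{trthm}~(ii) gives
$$\length_{\Z_{(p)}} \lim_R \TR_{2i-1-\lambda_d}^{r/m}(\Z)_{(p)}
= \sum_{\substack{1 \leqslant j \leqslant i \\ j \in I_p}}
\sum_{1 \leqslant t \leqslant s} \big( v_p(i - d(m, m p^{t-1} j)) + t - 1 \big)
= \sum_{1 \leqslant k \leqslant i} \big( v_p(i - (k-1)) + v_p(k) \big),$$
where the reindexing $k = p^{t-1} j$ runs over $1, \dots, i$ exactly once, and $d(m, mk) = k - 1$. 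The inner sum telescopes to $\sum_{1 \leqslant k \leqslant i} v_p(i - k + 1) + \sum_{1 \leqslant k \leqslant i} v_p(k) = 2 v_p(i!)= v_p((i!)^2)$, proving~(ii).

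The only genuine obstacle is bookkeeping: I must make sure that $d(m, m \ell) = \ell - 1$ (so that the truncated representation $\lambda_{d}$ at level $\ell = r/m$ stabilizes to the representation with $\dim_{\C}(\lambda_d^{C_{p^s}}) = \lfloor d/p^s \rfloor$ behaving correctly under $R$), and that the stabilization range from Addendum~\ref{stabilization}~(ii) — namely $i + 1 < p^{v_p(r/m)+1}$ — is exactly what is needed, rather than the $m(i+1) < p^{v_p(r)+1}$ of part~(i); this is the factor-of-$m$ discrepancy between the two propositions and is precisely why the order drops from $(mi)!(i!)^m$ to $(i!)^2$. The cited input \cite[Lemma~2.6]{h6} is used in the same way as in Proposition~\ref{middleterm} to identify the limit with a fixed finite level. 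Everything else is a transcription of the previous proof with $r$ replaced by $r/m$ and $\lambda_d = \lambda_{d(m,r)}$ kept fixed, and no new homotopy-theoretic input is required.
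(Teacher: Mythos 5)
Your proposal is correct and follows essentially the same route as the paper: $p$-local decomposition into $p$-typical factors via Proposition~\ref{ptypicaldecomposition}, stabilization of the limit at a finite level using Addendum~\ref{stabilization}~(ii) and \cite[Lemma~2.6]{h6}, and then Theorem~\ref{trthm} for the rank and length counts. The only difference is cosmetic indexing---you parameterize the factors by $j \in I_p$ dividing $r/m$ with $s = s_p(i,j)$, whereas the paper writes $m = p^v m'$ and parameterizes by $j \in m'I_p$ dividing $r$ at level $s-v$; under the bijection $j \mapsto m'j$ these agree, and both reindex the double sum by $k = p^{t-1}j$ to obtain $2v_p(i!) = v_p((i!)^2)$.
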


\begin{proof}It follows from Lemma~\ref{rational} and
Addendum~\ref{stabilization} that the groups $\lim_R
\TR_{q-\lambda_d}^{r/m}(\Z)$ are finitely generated. We fix a prime
number $p$ and write $m = p^vm'$ with $m'$ not divisible by $p$. Then
for $q = 2i-2$ and $q = 2i-1$, there is a canonical isomorphism 
$$\lim_R \TR_{q-\lambda_d}^{r/m}(\Z)_{(p)} \xto{\sim}
\prod_{\substack{1 \leqslant j \leqslant mi \\ j \in m'I_p}}
\TR_{q-\lambda_d}^{s-v}(\Z;p)_{(p)},$$
where $s = s_p(m,i,j)$. From Theorem~\ref{trthm}~(i) we find
$$\begin{aligned}
{} & \operatorname{rk}_{\Z_{(p)}} \lim_R
\TR_{2i-\lambda_d}^{r/m}(\Z)_{(p)}  
= \lvert \{ j \in m'I_p \mid \text{$i = d(m,p^{n+v-1}j)$, for some
$n \geqslant 1$} \} \rvert \cr 
{} & = \lvert \{ r \in m\N \mid i = d(m,r) \} \rvert
 = \lvert \{mi+m\} \rvert = 1 \cr
\end{aligned}$$
which proves statement~(i). Similarly, from Theorem~\ref{trthm}~(ii), we have
$$\begin{aligned}
{} & \length_{\Z_{(p)}} \lim_R \TR_{2i-1-\lambda_d}^r(\Z)_{(p)} 
= \sum_{\substack{1 \leqslant j \leqslant mi\\ j \in m'I_p}}
\length_{\Z_{(p)}} \TR_{2i-1-\lambda_d}^{s-v}(\Z;p)_{(p)}  \cr
{} & = \sum_{\substack{1 \leqslant j \leqslant mi \\ j \in m'I_p}}
\sum_{1 \leqslant t \leqslant s-v} \big( v_p(i-d(m,p^{t+v-1}j)) + t-1 \big) 
= \sum_{1 \leqslant k \leqslant i} \big( v_p(i-d(m,km)) + v_p(k) \big) \cr
{} & = \sum_{0 \leqslant l < i} v_p(i-l) + \!\! \sum_{1 \leqslant k
  \leqslant i} v_p(k) 
= 2 \sum_{1 \leqslant k \leqslant i} v_p(k) = v_p((i!)^2) \cr
\end{aligned}$$
which proves statement~(ii). 
\end{proof}

\begin{prop}\label{verschiebung}Let $m$ and $r$ be positive integers,
and let $d = d(m,r)$ be the integer part of $(r-1)/m$. Then the
Verschiebung map
$$V_m \colon \lim_R \TR_{q-\lambda_d}^{r/m}(\Z) \to
\lim_R \TR_{q-\lambda_d}^r(\Z)$$
is injective for all integers $q$, and has free abelian cokernel for
all even integers $q$.
\end{prop}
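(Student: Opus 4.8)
The plan is to reduce to $p$-typical statements and apply Theorem~\ref{trthm}~(iii) together with the stabilization results already established. First I would fix a prime $p$ and use Proposition~\ref{ptypicaldecomposition} to identify, for $q = 2i-2$ and $q = 2i-1$, the canonical isomorphisms
$$\lim_R \TR_{q-\lambda_d}^r(\Z)_{(p)} \xto{\sim}
\prod_{\substack{1 \leqslant j \leqslant mi \\ j \in I_p}}
\TR_{q-\lambda_d}^{s}(\Z;p)_{(p)}, \hskip6mm
\lim_R \TR_{q-\lambda_d}^{r/m}(\Z)_{(p)} \xto{\sim}
\prod_{\substack{1 \leqslant j \leqslant mi \\ j \in m'I_p}}
\TR_{q-\lambda_d}^{s-v}(\Z;p)_{(p)},$$
exactly as in the proofs of Propositions~\ref{middleterm} and~\ref{lefthandterm}, where $m = p^vm'$ with $m'$ prime to $p$, where $s = s_p(m,i,j)$, and where the indexing set on the right is cut down to $j \leqslant mi$ because the restriction maps stabilize once the homotopical degree is below the relevant fixed-point dimension. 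The point is that under these identifications the Verschiebung map $V_m$ corresponds, via the commutative diagrams~(\ref{frobeniusverschiebungrestriction}), to the map $V_m^{\gamma}$ which sends the factor indexed by $j \in m'I_p$ to the factor indexed by the same $j$ (now regarded as an element of $I_p$, since $m' \mid j$) by the $p$-typical map $m'V^v \colon \TR_{q-\lambda_d}^{s-v}(\Z;p)_{(p)} \to \TR_{q-\lambda_d}^{s}(\Z;p)_{(p)}$, and annihilates the factors indexed by $j \in I_p \setminus m'I_p$.

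Granting that matching of maps, the conclusion is immediate from Theorem~\ref{trthm}~(iii): each iterated Verschiebung $V^v$ is injective with cokernel that is free abelian for $q$ even, and multiplication by the unit $m' \in \Z_{(p)}^{\times}$ is an automorphism, so $m'V^v$ is injective with cokernel free abelian for $q$ even. A product of injective maps is injective and a product of maps with free cokernel has free cokernel; combining over all primes $p$ (using that $\lim_R \TR_{q-\lambda_d}^{r/m}(\Z)$ and $\lim_R \TR_{q-\lambda_d}^{r}(\Z)$ are finitely generated by Lemma~\ref{rational} and Addendum~\ref{stabilization}, so that injectivity and freeness of cokernel may be checked rationally and prime-by-prime on the $p$-local pieces) gives that $V_m$ itself is injective for all $q$ and has free abelian cokernel for $q$ even.

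I would organize the write-up as: (1) reduce to checking the two assertions after localizing at each prime $p$, citing finite generation; (2) recall the $p$-typical decompositions of source and target from the proofs of Propositions~\ref{middleterm} and~\ref{lefthandterm}; (3) trace $V_m$ through these decompositions using the right-hand square and the $V_s^{\gamma}$ recipe in~(\ref{frobeniusverschiebungrestriction}), to see it is a product over $j \in m'I_p$ of the maps $m'V^v$ together with zero maps onto the remaining coordinates — note that on those remaining coordinates the source is zero, so $V_m$ is in fact a product of the maps $m'V^v$ only; (4) invoke Theorem~\ref{trthm}~(iii) and the fact that $m'$ is a $p$-local unit to conclude.

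The main obstacle is step~(3): one must check carefully that, after passing to the limit over $R$ on both sides and restricting the index set to $j \leqslant mi$, the map $V_m$ really does respect the indexing and shifts the $p$-typical level by exactly $v = v_p(m)$ in the way claimed, i.e. that $s_p(m,i,j)$ for $j \in m'I_p$ on the target matches $s-v$ on the source with the same combinatorial bookkeeping used in Proposition~\ref{lefthandterm}. This is a matter of unwinding the definitions of $\gamma$, $R$, $F$, and $v_s$, of verifying compatibility of $V_m$ with the structure maps $R$ in the two towers so that it passes to the limit, and of confirming that the truncation of the index set to $1 \leqslant j \leqslant mi$ is preserved by $V_m$ — all of which is routine but needs to be stated cleanly.
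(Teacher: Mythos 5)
Your proposal is correct and follows essentially the same route as the paper's proof: fix a prime $p$, write $m = p^v m'$, use the $p$-typical decompositions from the proofs of Propositions~\ref{middleterm} and~\ref{lefthandterm} (via Proposition~\ref{ptypicaldecomposition} and the diagrams~(\ref{frobeniusverschiebungrestriction})) to identify $V_m$ $p$-locally with a product of the maps $m'V^v$ between the $p$-typical groups, and conclude from Theorem~\ref{trthm}~(iii) together with the fact that $m'$ is a $p$-local unit. The paper's write-up is terser (it simply asserts the canonical isomorphism of $V_m$ with $m'V^v$ on the identified products), but the bookkeeping you flag in step~(3) is exactly the content of that assertion and is handled the same way.
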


\begin{proof}We fix a prime number $p$ and show that the Verschiebung
map
$$V_m \colon \lim_R \TR_{q-\lambda_d}^{r/m}(\Z)_{(p)} \to
\lim_R \TR_{q-\lambda_d}^r(\Z)_{(p)}$$
is injective for all integers $q$, and has cokernel a free 
$\Z_{(p)}$-module for all even integers $q$. We write $m = p^vm'$
with $m'$ not divisible by $p$. Then for $q = 2i-2$ and $q = 2i-1$ the
map $V_m$ is canonically isomorphic to the map
$$m'V^v \colon 
\prod_{\substack{1 \leqslant j \leqslant mi \\ j \in m'I_p}}
\TR_{q-\lambda_d}^{s-v}(\Z;p)_{(p)} \to
\prod_{\substack{1 \leqslant j \leqslant mi \\ j \in I_p}}
\TR_{q-\lambda_d}^s(\Z;p)_{(p)},$$
where $s = s_p(m,i,j)$. The statement now follows from
Theorem~\ref{trthm}~(iii). 
\end{proof}

\begin{proof}[of Theorem~\ref{kthm}]The statement follows immediately
from the long exact sequence recalled in the introduction together with
Propositions~\ref{middleterm},~\ref{lefthandterm}, and~\ref{verschiebung}. 
\end{proof}

\section{The dual numbers}\label{dualnumberssection}

It follows from Theorem~\ref{trthm} that $K_{2i}(\Z[x]/(x^2),(x))$ is a
finite abelian group of order $(2i)!$. In this section, we investigate
the structure of these groups in low degrees.

\begin{theorem}\label{dualnumberthm}There are isomorphisms
$$\begin{aligned}
K_2(\Z[x]/(x^2),(x)) & \approx \Z/2\Z \cr
K_4(\Z[x]/(x^2),(x)) & \approx \Z/8\Z \oplus \Z/3\Z \cr
K_6(\Z[x]/(x^2),(x)) & \approx \Z/2\Z \oplus \Z/2\Z \oplus
\Z/4\Z \oplus \Z/9\Z \oplus \Z/5\Z \cr
\end{aligned}$$
\end{theorem}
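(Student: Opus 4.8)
The plan is to compute the groups $K_{2i}(\Z[x]/(x^2),(x))$ for $i = 1, 2, 3$ by a prime-by-prime analysis, using the long exact sequence from the introduction together with the $p$-typical machinery developed in Sections~\ref{bigtrsection}--\ref{ksection}. Since the total order is $(2i)!$ (namely $2$, $24$, $720$ respectively), it suffices, for each prime $p$ dividing $(2i)!$, to pin down the $p$-primary part of $K_{2i}$. First I would fix $p$ and reduce, via the long exact sequence and Propositions~\ref{middleterm},~\ref{lefthandterm}, and~\ref{verschiebung}, to understanding the short exact sequence
$$0 \to \operatorname{coker}(V_2) \to K_{2i}(\Z[x]/(x^2),(x))_{(p)} \to \ker \to 0$$
where the relevant $\TR$-terms are assembled from the finite groups $\TR_{2i-1-\lambda_d}^s(\Z;p)_{(p)}$ via the explicit product decompositions in those propositions. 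The point is that for the dual numbers ($m=2$) and small $i$, the index set $\{1 \leqslant j \leqslant 2i : j \in I_p\}$ is tiny, so only finitely many $\TR_{q-\lambda_d}^s(\Z;p)$ with small $s$ enter, and their orders are known from Theorem~\ref{trthm}~(ii).

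Next I would carry out the explicit bookkeeping for each $(i,p)$. For $i=1$ the only contribution is at $p=2$, where $\TR^1$-level groups are cyclic by B\"okstedt's theorem, forcing $K_2 \approx \Z/2\Z$; this recovers Geller--Roberts. For $i=2$ the relevant primes are $p=2$ and $p=3$; at $p=3$ everything is $\TR^1$-level (since $p > 2i$ would make it vanish, but here $3 \leqslant 4$, so exactly one $\TR^1$ group of order $3$ appears) giving the $\Z/3\Z$ summand, while at $p=2$ one must resolve a group of order $8$, and I would argue it is cyclic by tracking the Verschiebung/restriction maps and invoking the known module structure of the low-$n$ groups $\TR^n_{2i-1-\lambda}(\Z;2)$ from~\cite[Theorem~18]{h7}. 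For $i=3$ the primes are $p=2,3,5$; the $p=5$ part is $\TR^1$-level of order $5$, the $p=3$ part has order $9$ and I would show it is cyclic $\Z/9\Z$ by the same low-level analysis, and the $p=2$ part has order $16$ which must be shown to be $\Z/2 \oplus \Z/2 \oplus \Z/4$ — this is the delicate case, requiring the full short exact sequence and the explicit description of $\operatorname{coker}(V)$ from Theorem~\ref{trthm}~(iii) together with the known structure (not just order) of $\TR^2$ and $\TR^3$ at the prime $2$.

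The main obstacle is precisely this last point: Theorem~\ref{trthm} only gives the \emph{orders} of the odd $\TR$-groups and the fact that even cokernels of $V$ are free, but not the isomorphism type of the finite groups, and for $n>1$ these are genuinely non-cyclic. So the proof cannot be purely formal; it must import the partial structural results of~\cite[Theorem~18]{h7} (or redo the relevant Bockstein/spectral-sequence computations at the prime $2$ in the small range $i \leqslant 3$) to determine how the cyclic factors assemble. I would organize the argument so that the formal part — reducing $K_{2i}$ to an extension of a free-cokernel piece by a subgroup of an explicit product of $\TR^s$-groups — is done uniformly, and then handle the three values of $i$ as separate short computations, flagging that the $2$-primary parts in degrees $4$ and $6$ are where the input from~\cite{h7} is essential. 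For odd primes the groups turn out to be $\TR^1$-level and hence cyclic by B\"okstedt, so those summands ($\Z/3$ in degree $4$; $\Z/3, \Z/5$ in degree $6$) drop out immediately; the computation $\Z/2\oplus\Z/2\oplus\Z/4\oplus\Z/9\oplus\Z/5$ in degree $6$ then follows by combining these.
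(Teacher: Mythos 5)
Your plan matches the paper's in outline: decompose $p$-locally using the $j$-indexed products from the proofs of Propositions~\ref{middleterm}--\ref{verschiebung}, dispatch the odd primes by order and cyclicity considerations, and do real work at $p=2$ by importing structural input from~\cite{h7}. But there are two substantive problems. First, you repeatedly assert that the odd-prime summands (and even the degree-$2$ group at $p=2$) are ``$\TR^1$-level and hence cyclic by B\"okstedt.'' That is not what happens: the group computing $K_2(\Z[x]/(x^2),(x))_{(2)}$ is $\TR^2_1(\Z;2)$, the $\Z/3$ in degree $4$ comes from $\TR^2_{3-\lambda_1}(\Z;3)_{(3)}$, and the $\Z/5$ in degree $6$ from $\TR^2_{5-\lambda_2}(\Z;5)_{(5)}$, since $s_p(2,i,1)$ satisfies $p^{s-1}\leqslant 2i < p^s$ and is already $\geqslant 2$ for these $(i,p)$. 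The conclusions survive only because these groups happen to have prime order and are thus automatically cyclic; your stated mechanism is wrong. The same slip produces the internal inconsistency where you list ``$\Z/3,\Z/5$'' in the final summary for degree $6$ when the $3$-primary summand is $\Z/9$ (from $\TR^2_{5-\lambda_1}(\Z;3)_{(3)}$, cyclic by Theorem~\ref{modp} because its mod-$3$ length is $1$) --- a fact you state correctly two sentences earlier. Note also that Theorem~\ref{trthm}~(iii) only describes $\operatorname{coker}(V)$ in \emph{even} degree; the Verschiebung entering the $K$-theory sequence acts on odd-degree $\TR$-groups, so~(iii) gives injectivity but nothing about the cokernel's isomorphism type.

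Second, and more importantly, the degree-$6$, $p=2$ case is a genuine gap in your sketch. Theorems~\ref{trthm} and~\ref{modp} show that $\TR^3_{5-\lambda_1}(\Z;2)_{(2)}$ has order $16$ and mod-$2$ length $3$, which is consistent with both $\Z/4\oplus\Z/4$ and $\Z/8\oplus\Z/2$; distinguishing them is exactly the content of the hard step, and ``importing~\cite[Theorem~18]{h7}'' does not do it, since that result concerns the $\lambda = 0$ groups. The paper's argument maps to $\TR^1_3(\Z;2)$ via the iterated Frobenius $F^2$, establishes surjectivity on the Borel homology term from~\cite[Proposition~4.2]{h6} and~\cite[Proposition~15]{h7}, uses the skeleton spectral sequence to see that $\mathbb{H}_5(C_4,\TR^1_{\boldsymbol{\cdot}-\lambda_1}(\Z;2))_{(2)}$ splits as $\Z/4\oplus\Z/2$, and then invokes~\cite[Lemma~6]{h7} to check that $F^2$ kills the $\Z/2$ and therefore hits a generator of the $\Z/4$, exhibiting a split $\Z/4$ summand in $\TR^3_{5-\lambda_1}(\Z;2)_{(2)}$ and forcing $\Z/4\oplus\Z/4$. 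Any complete proof along your lines must supply this Frobenius-splitting computation or an equivalent one; it is the one genuinely new calculation in the theorem.
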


\begin{proof}We know from Theorem~\ref{trthm} that the orders of these
three groups are as stated. Hence, it suffices to show that the
$2$-primary torsion subgroup of the groups in degree $4$ and $6$ and
the $3$-primary torsion subgroup of the group in degree $6$ are as
stated.

We first consider the group in degree $4$ which is given by the
short exact sequence
$$0 \to \lim_R \TR_{3-\lambda_d}^{r/2}(\Z) \xto{V_2}
\lim_R \TR_{3-\lambda_d}^r(\Z) \to
K_4(\Z[x]/(x^2),(x)) \to 0.$$
The middle term in the short exact sequence decomposes $2$-locally as
the direct sum
$$\lim_R \TR_{3-\lambda_d}^r(\Z)_{(2)} \xto{\sim}
\TR_{3-\lambda_1}^3(\Z;2)_{(2)} \oplus
\TR_{3-\lambda_1}^1(\Z;2)_{(2)},$$
where the first and second summands on the right-hand side correspond
to $j = 1$ and $j = 3$, respectively. Similarly, the left-hand term in
the short exact sequence decomposes $2$-locally as
$$\lim_R \TR_{3-\lambda_d}^{r/2}(\Z)_{(2)} \xto{\sim}
\TR_{3-\lambda_1}^2(\Z;2)_{(2)} \oplus
\TR_{3-\lambda_1}^0(\Z;2)_{(2)}.$$
The summands corresponding to $j = 3$ are both zero. Hence, the
$2$-primary torsion subgroup of $K_4(\Z[x]/(x^2),(x))$ is canonically
isomorphic to the cokernel of the Verschiebung map
$$V \colon \TR_{3-\lambda_1}^2(\Z;2)_{(2)} \to
\TR_{3-\lambda_1}^3(\Z;2)_{(2)}.$$
To evaluate this cokernel, we consider the following diagram
$$\xymatrix{
{ 0 } \ar[r] &
{ \TR_3^1(\Z;2)_{(2)} } \ar[r]^{V} \ar@{=}[d] &
{ \TR_3^2(\Z;2)_{(2)} } \ar[r]^(.45){\iota_*} \ar[d]^{V} &
{ \TR_{3-\lambda_1}^2(\Z;2)_{(2)} } \ar[r] \ar[d]^{V} &
{ 0 } \cr
{ 0 } \ar[r] &
{ \TR_3^1(\Z;2)_{(2)} } \ar[r]^{V^2} &
{ \TR_3^3(\Z;2)_{(2)} } \ar[r]^(.45){\iota_*} &
{ \TR_{3-\lambda_1}^3(\Z;2)_{(2)} } \ar[r] &
{ 0 } \cr
}$$
where the rows are exact by~\cite[Proposition~4.2]{h6}. It follows
from~\cite[Theorem~18]{h7} that this diagram is isomorphic to the
diagram
$$\xymatrix{
{ 0 } \ar[r] &
{ \Z/2\Z } \ar[r]^{4} \ar@{=}[d] &
{ \Z/8\Z } \ar[r]^{1} \ar[d]^{(a,b)} &
{ \Z/4\Z } \ar[r] \ar[d]^{(a,b)} &
{ 0 } \cr
{ 0 } \ar[r] &
{ \Z/2\Z } \ar[r]^(.37){(0,4)} &
{ \Z/8\Z \oplus \Z/8\Z } \ar[r]^{1 \oplus 1} &
{ \Z/8\Z \oplus \Z/4\Z } \ar[r] &
{ 0 }
}$$
where $a \in 2\Z/8\Z$ and $b \in (\Z/8\Z)^*$. Now, the cokernels of
the middle and right-hand vertical maps are isomorphic to
$\Z/8\Z$. This shows that $K_4(\Z[x]/(x^2),(x))_{(2)}$ is as stated. 

We next consider the group in degree $6$ which is given by the
short exact sequence
$$0 \to \lim_R \TR_{5-\lambda_d}^{r/2}(\Z) \xto{V_2}
\lim_R \TR_{5-\lambda_d}^r(\Z) \to
K_6(\Z[x]/(x^2),(x)) \to 0$$
and begin by evaluating the $2$-primary torsion subgroup. The middle
term in the short exact sequence decomposes $2$-locally as the direct
sum
$$\lim_R \TR_{5-\lambda_d}^r(\Z)_{(2)} \xto{\sim}
\TR_{5-\lambda_1}^3(\Z;2)_{(2)} \oplus
\TR_{5-\lambda_2}^2(\Z;2)_{(2)} \oplus
\TR_{5-\lambda_2}^1(\Z;2)_{(2)},$$
where the three summands on the right-hand side correspond
to $j = 1$, $j = 3$, and $j = 5$, respectively. Similarly, the
left-hand term in the short exact sequence decomposes $2$-locally as
$$\lim_R \TR_{5-\lambda_d}^{r/2}(\Z)_{(2)} \xto{\sim}
\TR_{5-\lambda_1}^2(\Z;2)_{(2)} \oplus
\TR_{5-\lambda_2}^1(\Z;2)_{(2)} \oplus
\TR_{5-\lambda_2}^0(\Z;2)_{(2)}.$$
The summands corresponding to $j = 5$ are both zero. Hence, the
$2$-primary torsion subgroup of $K_6(\Z[x]/(x^2),(x))$ is canonically
isomorphic to the direct sum of the cokernels of
$$\begin{aligned}
V & \colon \TR_{5-\lambda_1}^2(\Z;2)_{(2)} \to
\TR_{5-\lambda_1}^3(\Z;2)_{(2)} \cr
V & \colon \TR_{5-\lambda_2}^1(\Z;2)_{(2)} \to
\TR_{5-\lambda_2}^2(\Z;2)_{(2)}. \cr
\end{aligned}$$
We show that these are isomorphic to $\Z/2\Z \oplus \Z/2\Z$ and
$\Z/4\Z$, respectively. The statement for the latter cokernel follows
directly from Theorems~\ref{trthm} and~\ref{modp}. The two theorems
also show that the group $\TR_{5-\lambda_1}^2(\Z;2)_{(2)}$ is
isomorphic to $\Z/2\Z \oplus \Z/2\Z$ and that the group
$\TR_{5-\lambda_1}^3(\Z;2)_{(2)}$ is isomorphic to either $\Z/4\Z
\oplus \Z/4\Z$ or $\Z/8\Z \oplus \Z/2\Z$. We will prove that the
latter group is isomorphic to $\Z/4\Z \oplus \Z/4\Z$ by showing that
it contains $\Z/4\Z$ as a direct summand. To this end, we consider the
following diagram
$$\xymatrix{
{ 0 } \ar[r] &
{ \mathbb{H}_5(C_2,\TR_{\,\boldsymbol{\cdot}-\lambda_1}^1(\Z;2))_{(2)} }
\ar[r] \ar[d]^{V} &
{ \TR_{5-\lambda_1}^2(\Z;2)_{(2)} } \ar[r] \ar[d]^{V} &
{ \TR_5^1(\Z;2)_{(2)} } \ar[r] \ar[d]^{V} &
{ 0 } \cr
{ 0 } \ar[r] &
{ \mathbb{H}_5(C_4,\TR_{\,\boldsymbol{\cdot}-\lambda_1}^1(\Z;2))_{(2)} } \ar[r]
\ar[d]^{F^2} &
{ \TR_{5-\lambda_1}^3(\Z;2)_{(2)} } \ar[r] \ar[d]^{F^2} \ar[r] &
{ \TR_5^2(\Z;2)_{(2)} } \ar[r] &
{ 0 } \cr
{} &
{ \TR_3^1(\Z;2)_{(2)} } \ar@{=}[r] & 
{ \TR_3^1(\Z;2)_{(2)} } &
{} &
{} \cr
}$$
where the rows, but not the columns, are exact. It follows from
Theorem~\ref{trthm} that the top middle and right-hand vertical maps $V$
are injective. Hence, also the top left-hand vertical map $V$ is
injective. Moreover,~\cite[Proposition~4.2]{h6}
and~\cite[Proposition~15]{h7} show that the bottom left-hand vertical
map $F^2$ is surjective. Hence, also the bottom right-hand vertical map
$F^2$ is surjective. The skeleton spectral sequence
$$E_{s,t}^2 = H_s(C_4,\TR_{t-\lambda_1}^1(\Z;2))_{(2)} \Rightarrow
\mathbb{H}_{s+t}(C_4,\TR_{\,\boldsymbol{\cdot}-\lambda_1}^1(\Z;2))_{(2)}$$
shows that the middle left-hand group is an extension of
$E_{2,3}^{\infty} = \Z/4\Z$ by $E_{0,5}^{\infty} = \Z/2\Z$ and the
diagram above shows that the extension is split. It follows
from~\cite[Lemma~6]{h7} that
$$F \colon \mathbb{H}_5(C_4,\TR_{\,\boldsymbol{\cdot}-\lambda_1}^1(\Z;2))_{(2)} \to
\mathbb{H}_5(C_2,\TR_{\,\boldsymbol{\cdot}-\lambda_1}^1(\Z;2))_{(2)}$$
maps the generator of the summand $E_{0,5}^{\infty} = \Z/2\Z$ to
zero. Hence, the lower left-hand vertical map $F^2$ in the diagram
above maps the generator of the summand $E_{0,5}^{\infty} = \Z/2\Z$ to
zero. But the map $F^2$ is surjective, and therefore, maps a generator
of the summand $E_{3,2}^{\infty} = \Z/4\Z$ non-trivially. It follows
that $\TR_{5-\lambda_1}^3(\Z;2)_{(2)}$ contains direct summand
isomorphic to $\Z/4\Z$, and hence, is isomorphic to $\Z/4\Z \oplus 
\Z/4\Z$. This shows that the cokernel of the upper middle vertical map
$V$ is isomorphic to $\Z/2\Z \oplus \Z/2\Z$, and hence, that
$K_6(\Z[x]/(x^2),(x))_{(2)}$ is as stated. 

It remains to evaluate $K_6(\Z[x]/(x^2),(x))_{(3)}$. This group is
canonically isomorphic to the direct sum of
$\TR_{5-\lambda_1}^2(\Z;3)_{(3)}$ and
$\TR_{5-\lambda_2}^1(\Z;3)_{(3)}$. It follows from Theorem~\ref{trthm}
that the former group has order $9$ and that the latter group is
zero and from Theorem~\ref{modp} that the former group is cyclic. This
completes the proof.
\end{proof}

\begin{theorem}\label{oddtorsion}Let $p$ be an odd prime number. Then
for $2i < p^2$ the $p$-primary torsion subgroup of
$K_{2i}(\Z[x]/(x^2),(x))$ is isomorphic to $(\Z/p\Z)^{r_1} \oplus
(\Z/p^2\Z)^{r_2}$, where
$$(r_1,r_2) = \begin{cases}
(0,\lfloor i/p \rfloor) & \text{if $2i+1 \equiv 0$ modulo $p$} \cr
(\lfloor 2i/p \rfloor - 2,1) & \text{if $2i+1 \equiv j$ modulo $p$
with $1 \leqslant j \leqslant 2i/p$ odd} \cr
(\lfloor 2i/p \rfloor, 0) & \text{otherwise.} \cr
\end{cases}$$
Here $\lfloor x \rfloor$ denotes the largest integer less than or
equal to $x$.
\end{theorem}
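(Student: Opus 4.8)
The plan is to follow the scheme of the proof of Theorem~\ref{dualnumberthm}: extract a clean direct-sum description of the $p$-primary part of $K_{2i}(\Z[x]/(x^2),(x))$ and then identify each summand by means of Theorems~\ref{trthm} and~\ref{modp}. First I would use the long exact sequence of the introduction together with Propositions~\ref{middleterm}(ii),~\ref{lefthandterm}(ii), and~\ref{verschiebung} to obtain the short exact sequence
$$0 \to \lim_R \TR_{2i-1-\lambda_d}^{r/2}(\Z) \xto{V_2}
\lim_R \TR_{2i-1-\lambda_d}^r(\Z) \to K_{2i}(\Z[x]/(x^2),(x)) \to 0.$$
Localizing at the odd prime $p$ and invoking Propositions~\ref{middleterm},~\ref{lefthandterm}, and~\ref{verschiebung} (with $v_p(2) = 0$), the map $V_2$ is identified $p$-locally with multiplication by $2$ into the factors indexed by the even $j$, hence an isomorphism onto their direct sum since $2 \in \Z_{(p)}^*$. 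Consequently
$$K_{2i}(\Z[x]/(x^2),(x))_{(p)} \cong \bigoplus_{\substack{1 \leqslant j \leqslant 2i,\ j\ \text{odd} \\ p \nmid j}}
\TR_{2i-1-\lambda_{d_j}}^{s_j}(\Z;p)_{(p)},$$
where $s_j$ is determined by $p^{s_j-1}j \leqslant 2i < p^{s_j}j$ and $d_j = (p^{s_j-1}j-1)/2$; the hypothesis $2i < p^2$ forces $s_j \in \{1,2\}$, with $s_j = 2$ exactly when $j \leqslant 2i/p$ (so then $j < p$ and $p \nmid j$ automatically) and $s_j = 1$ when $j > 2i/p$.

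Next I would compute the summands. For $s_j = 1$, B\"okstedt's result recalled in Section~\ref{bigtrsection} gives $\TR_{2i-1-\lambda_{d_j}}^1(\Z;p)_{(p)} \cong \Z/p^{v_p(2i+1-j)}\Z$, and $1 \leqslant 2i+1-j \leqslant 2i < p^2$ makes this $\Z/p\Z$ when $j \equiv 2i+1 \pmod p$ and $0$ otherwise. For $s_j = 2$, Theorem~\ref{trthm}(ii) gives
$$\length_{\Z_{(p)}} \TR_{2i-1-\lambda_{d_j}}^2(\Z;p)
= v_p(2i+1-j) + 1 + v_p(2i+1-pj),$$
and since $2i < p^2$ both valuations are at most $1$, while $p \mid 2i+1$ and $j \equiv 2i+1 \pmod p$ cannot both hold for $p \nmid j$; so this length is $1$ or $2$. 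To determine the group and not just its order, I would use that $\TR_{2i-2-\lambda_{d_j}}^2(\Z;p)$ is torsion free by Theorem~\ref{trthm}(i), so that the coefficient sequence together with Corollary~\ref{mod2} identifies the number of cyclic summands of $\TR_{2i-1-\lambda_{d_j}}^2(\Z;p)_{(p)}$ with $\length_{\Z_{(p)}} \TR_{2i-1-\lambda_{d_j}}^2(\Z;p,\Z/p\Z)$. Since $2i-1 \geqslant 2\dim_{\C}\lambda_{d_j}$, Theorem~\ref{modp}(i) evaluates the latter; computing $2\delta_p(\lambda_{d_j}) \equiv p^2 + 2pj - 1 \pmod{2p^2}$ and using $2i < p^2$ together with the parity of $2i$, one checks that the congruences forcing length $2$ never occur. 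Hence every $s_j = 2$ summand is cyclic, equal to $\Z/p\Z$ or $\Z/p^2\Z$ according to whether its length is $1$ or $2$.

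Finally, since every summand is cyclic of order $p$ or $p^2$, I would write $K_{2i}(\Z[x]/(x^2),(x))_{(p)} \cong (\Z/p\Z)^{r_1} \oplus (\Z/p^2\Z)^{r_2}$, where $r_1 + 2r_2 = v_p((2i)!) = \lfloor 2i/p \rfloor$ by Theorem~\ref{kthm}(ii) and $2i < p^2$, and where $r_1 + r_2$ is the number of non-zero summands above. With $K = \lfloor 2i/p\rfloor$ and $c = (2i+1) \bmod p$, that number is the count of odd $j$ in $[1,K]$ plus the count of odd $j$ in $(K,2i]$ congruent to $c$ modulo $p$ — a mechanical count in residue classes modulo $p$ and modulo $2$, whose crucial feature (forced by $2i$ being even) is that $c$ and $K$ have opposite parity, with $K$ even when $c = 0$. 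Sorting according to whether $c = 0$, or $c$ is odd with $c \leqslant K$, or neither, then yields precisely the three cases of the statement. I expect the main obstacle to be the cyclicity of the level-$2$ summands: the order is immediate from Theorem~\ref{trthm}, but ruling out $(\Z/p\Z)^2$ requires the exact residue of $2\delta_p(\lambda_{d_j})$ modulo $2p^2$ together with the constraint $2i < p^2$, whereas the remaining steps are citations or routine arithmetic.
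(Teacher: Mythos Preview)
Your proposal is correct and follows essentially the same route as the paper's proof: the same $p$-local direct-sum decomposition indexed by odd $j$ coprime to $p$, the same length computation via Theorem~\ref{trthm}(ii), the same cyclicity argument for the level-$2$ summands via the congruence $2i+1 \equiv p^2 + 2pj \pmod{2p^2}$ coming from Theorem~\ref{modp}, and the same final count. Your length formula $v_p(2i+1-j) + 1 + v_p(2i+1-pj)$ for the level-$2$ summands is in fact the cleaner one (the paper writes $v_p(2i+1)$ in place of $v_p(2i+1-pj)$, which coincides under the constraints except at the boundary $2i+1 = p^2$), and your detour through the coefficient sequence and Corollary~\ref{mod2} to count cyclic summands is a harmless reformulation of the paper's direct appeal to Theorem~\ref{modp}.
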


\begin{proof}After localizing at the odd prime number $p$, the
short exact sequence
$$0 \to \lim_R \TR_{2i-1-\lambda_d}^{r/2}(\Z) \xto{V_2}
\lim_R \TR_{2i-1-\lambda_d}^r(\Z) \to K_{2i}(\Z[x]/(x^2),(x)) \to 0$$
induces a canonical isomorphism
$$\bigoplus_j \TR_{2i-1-\lambda_d}^s(\Z;p)_{(p)} \xto{\sim}
K_{2i}(\Z[x]/(x^2),(x))_{(p)},$$
where the sum runs over integers $1 \leqslant j \leqslant 2i$ coprime
to both $2$ and $p$ and $s = s_p(2,i,j)$ is the unique integer that
satisfies $p^{s-1}j \leqslant 2i < p^sj$. Since $2i < p^2$ we find
$$s_p(2,i,j) = \begin{cases}
2 & \text{if $1 \leqslant j \leqslant 2i/p$} \cr
1 & \text{if $2i/p < j \leqslant 2i$.} \cr
\end{cases}$$
If $2i/p < j \leqslant 2i$ we have $d = (j-1)/2$, and hence
$$\length_{\Z_{(p)}} \TR_{2i-1-\lambda_d}^1(\Z;p)_{(p)} = v_p(i-d) =
v_p(2i+1-j).$$
The length is at most $1$ since $2i < p^2$. If
$1 \leqslant j \leqslant 2i/p$ we have $d = (pj-1)/2$, and in this
case Theorem~\ref{trthm} shows that
$$\begin{aligned}
\length_{\Z_{(p)}} \TR_{2i-1-\lambda_d}^2(\Z;p)_{(p)} & = 
\length_{\Z_{(p)}} \TR_{2i-1-\lambda_d'}^1(\Z;p)_{(p)} + v_p(i-d) + 1 \cr
{} & = v_p(2i+1-j) + v_p(2i+1) + 1, \cr
\end{aligned}$$
where we have used that $\lambda_d' = \lambda_{d'}$ with $d' =
(j-1)/2$. The length is at most $2$ since $2i < p^2$ and since $j$
is coprime to $p$. We claim that the group
$\TR_{2i-1-\lambda_d}^2(\Z;p)_{(p)}$ is always cyclic. By
Theorem~\ref{modp}, the claim is equivalent to the congruence
$$2i-1 \not\equiv 2\delta_p(\lambda_d)-1 \mod 2p^2.$$
We compute that modulo $p^2$
$$\delta_p(\lambda_d) \equiv (1-p)((pj-1)/2 + (j-1)/2 \cdot p)$$
Hence, we have $2i-1 \equiv 2\delta_p(\lambda_d)-1$ modulo $2p^2$ if
and only if $2i + 1 \equiv 2pj + p^2$ modulo $2p^2$. This is possible
only if $2i+1$ is congruent to $0$ modulo $p$. If we write $2i+1 =
ap$ then $1 \leqslant a \leqslant p$ and $1 \leqslant j < a$. Hence,
$p \leqslant ap \leqslant p^2$ and $2p + p^2 \leqslant 2pj + p^2 < 2ap
+ p^2 \leqslant 3p^2$ which implies that the congruence $ap \equiv 2pj
+ p^2$ modulo $2p^2$ is equivalent to the equality $ap+p^2 = 2pj$. But
then $a+p = 2j$ which contradicts that $j < a$. The claim follows.

We now show that the integers $(r_1,r_2)$ are as stated. It follows
from Theorem~\ref{kthm} that
$$r_1 + 2r_2 = v_p((2i)!) = \lfloor 2i/p \rfloor.$$
In the case where $2i+1$ is congruent to $0$ modulo $p$, we proved
above that $r_1 = 0$. If we write $2i+1 = ap$ then $a$ is odd and
$$r_2 = \lfloor 2i/p \rfloor / 2 = (a-1)/2 = \lfloor (a-1)/2 + (p-1)/2p
\rfloor = \lfloor i/p \rfloor$$
as stated. In the case where $2i+1 \equiv j$ modulo $p$ with $1
\leqslant j \leqslant 2i/p$ odd, we proved above that $r_2 =
1$. Hence, $r_1 = \lfloor 2i/p \rfloor - 2$. Finally, in the remaining
case, $r_2 = 0$, and hence, $r_1 = \lfloor 2i/p \rfloor$. This
completes the proof.
\end{proof}

\begin{example}\label{vanishing}Let $p$ be an odd prime number. We
spell out the statement of Theorem~\ref{oddtorsion}, for
$i \leqslant p+1$. The $p$-primary torsion subgroup of
$K_{2i}(\Z[x]/(x^2),(x))$ is zero for $i \leqslant (p-1)/2$, is
cyclic of order $p$ for $(p+1)/2 \leqslant i < p$, and is cyclic of
order $p^2$ for $i = p$. The structure of the $p$-primary torsion
subgroup of $K_{2p+2}(\Z[x]/(x^2),(x))$ depends on the odd prime
$p$. It is cyclic of order $9$ if $p = 3$, and the direct sum of two
cyclic groups of order $p$ if $p \geqslant 5$.
\end{example}

\providecommand{\bysame}{\leavevmode\hbox to3em{\hrulefill}\thinspace}
\providecommand{\MR}{\relax\ifhmode\unskip\space\fi MR }
\providecommand{\MRhref}[2]{%
  \href{http://www.ams.org/mathscinet-getitem?mr=#1}{#2}
}
\providecommand{\href}[2]{#2}

\begin{acknowledgements}This paper was written in part while the third
author visited the Hausdorff Center for Mathematics at the University
of Bonn. He would like to thank the center and in particular Stefan
Schwede and Christian Ausoni for their hospitality and
support. Finally, we thank an anonymous referee for helpful comments.
\end{acknowledgements}

\affiliationone{
Vigleik Angeltveit\\
Department of Mathematics\\
The University of Chicago\\
Chicago, Illinois\\
USA\\
\email{vigleik@math.uchicago.edu}
}
\affiliationtwo{
Teena Gerhardt\\
Department of Mathematics\\
Indiana University\\
Bloomington, Indiana\\
USA\\
\email{tgerhard@indiana.edu}
}
\affiliationthree{
Lars Hesselholt\\
Graduate School of Mathematics\\
Nagoya University\\
Chikusa-ku\\
Nagoya, Japan
Japan\\
\email{larsh@math.nagoya-u.ac.jp}
}


\begin{thebibliography}{10}

\bibitem{angeltveitgerhardt}
V.~Angeltveit and T.~Gerhardt, \emph{{$RO(S^1)$}-graded
  {$\operatorname{TR}$}-groups of {$\mathbb{F}_p$}, {$\mathbb{Z}$}, and
  {$\ell$}}, arXiv:0811.1313.

\bibitem{arakitoda}
S.~Araki and H.~Toda, \emph{Multiplicative structures in {$\operatorname{mod}
  q$} cohomology theories {I}}, Osaka J. Math. \textbf{2} (1965), 71--115.

\bibitem{bokstedt1}
M.~B\"okstedt, \emph{Topological {H}ochschild homology of {$\mathbb{F}_p$} and
  {$\mathbb{Z}$}}, Preprint, Bielefeld University, 1985.

\bibitem{bokstedthsiangmadsen}
M.~B\"okstedt, W.-C. Hsiang, and I.~Madsen, \emph{The cyclotomic trace and
  algebraic {$K$}-theory of spaces}, Invent. Math. \textbf{111} (1993),
  465--540.

\bibitem{h7}
L.~Hesselholt, \emph{On the {W}hitehead spectrum of the circle},
  arXiv:0710.2823.

\bibitem{h6}
\bysame, \emph{The tower of {$K$}-theory of truncated polynomial algebras}, J.
  Topol. \textbf{1} (2008), 87--114.

\bibitem{hm1}
L.~Hesselholt and I.~Madsen, \emph{Cyclic polytopes and the {$K$}-theory of
  truncated polynomial algebras}, Invent. Math. \textbf{130} (1997), 73--97.

\bibitem{hm}
\bysame, \emph{On the {$K$}-theory of finite algebras over {W}itt vectors of
  perfect fields}, Topology \textbf{36} (1997), 29--102.

\bibitem{lindenstraussmadsen}
A.~Lindenstrauss and I.~Madsen, \emph{Topological {H}ochschild homology of
  number rings}, Trans. Amer. Math. Soc. \textbf{352} (2000), 2179--2204.

\bibitem{mandellmay}
M.~A. Mandell and J.~P. May, \emph{Equivariant orthogonal spectra and
  {$S$}-modules}, Mem. Amer. Math. Soc., vol. 159, Amer. Math. Soc.,
  Providence, RI, 2002.

\bibitem{mccarthy1}
R.~McCarthy, \emph{Relative algebraic {$K$}-theory and topological cyclic
  homology}, Acta Math. \textbf{179} (1997), 197--222.

\bibitem{roberts}
L.~Roberts, \emph{{$K_2$} of some truncated polynomial rings. {W}ith a section
  written jointly with {S. Geller}}, Ring Theory (Proc. Conf., Univ. Waterloo,
  Waterloo, 1978), Lecture Notes in Math., vol. 734, Springer-Verlag, New York,
  1979, pp.~249--278.

\bibitem{soule1}
C.~Soul\'{e}, \emph{Rational {$K$}-theory of the dual numbers of a ring of
  algebraic integers}, Algebraic {$K$}-theory (Evanston, Ill., 1980), Lecture
  Notes in Math., Springer-Verlag, New York, 1981, pp.~402--408.

\bibitem{staffeldt1}
R.~E. Staffeldt, \emph{Rational algebraic {$K$}-theory of certain truncated
  polynomial rings}, Prop. Amer. Math. Soc. \textbf{95} (1985), 191--198.

\bibitem{tomdieck}
T.~tom Dieck, \emph{Orbittypen und \"aquivariante {H}omologie {I}}, Arch. Math.
  (Basel) \textbf{26} (1975), 650--662.

\bibitem{tsalidis1}
S.~Tsalidis, \emph{On the algebraic {$K$}-theory of truncated polynomial
  rings}, Aberdeen Topology Center Preprint Series 9, 2002.

\end{thebibliography}
\end{document}